\newcommand{\NN}{\mathbb{N}}
\newcommand{\ZZ}{\mathbb{Z}}
\newcommand{\RR}{\mathbb{R}}
\newcommand{\F}{\mathbb{F}}
\newcommand{\qbin}[2]{{\genfrac{[}{]}{0pt}{}{#1}{#2}}_q}
\newcommand{\one}{\mathbf{1}}
\newcommand{\transp}{\mathsf{T}}
\newcommand\restr[2]{#1\raisebox{-.5ex}{$|$}_{#2}}
\newcommand{\Annbal}{\ensuremath{\Ann_{\bal}}}
\DeclareMathOperator{\rank}{rank}
\DeclareMathOperator{\arank}{arank}
\DeclareMathOperator{\spn}{span}
\DeclareMathOperator{\Ann}{Ann}
\DeclareMathOperator{\bal}{bal}
\DeclareMathOperator*{\rowspace}{row space}
\declaretheorem[style=definition]{definition}
\declaretheorem[style=plain,sibling=definition]{theorem}
\declaretheorem[style=plain,sibling=definition]{lemma}
\declaretheorem[style=plain,sibling=definition]{proposition}
\declaretheorem[style=plain,sibling=definition]{corollary}
\declaretheorem[style=plain,sibling=definition]{open problem}
\declaretheorem[style=remark,sibling=definition]{remark}
\declaretheorem[style=plain,numbered=no,name=Claim]{claim}
\declaretheorem[style=plain,title={Theorem}]{AlphTheorem}
\declaretheorem[style=plain,title={Corollary},numberlike=AlphTheorem]{AlphCorollary}
\newenvironment{proofclaim}{\par\noindent\textsl{Proof of claim.}\space}{\hfill $\scriptstyle\blacksquare$\par\smallskip}
\newcommand{\mytag}{\ensuremath{(\star)}}
\begin{document}

\begin{frontmatter}[classification=text]
%% EDITOR: this will force the keywords to appear right after the Abstract.
%%   If the abstract is too long and would force the keywords off the
%%   front page, please comment out % [classification=text] above
%%   This way the keywords will be floated on the bottom of the first page
%%   even though the Abstract spills over to the next page.

%%% AUTHOR: Title goes here.  This line is optional.  You must use it
%%   if title has footnote attached or requires nontrivial typesetting,
%%   e.g., inclusion of linebreaks to force nice layout.
\title{Excluding Affine Configurations over a Finite Field}
%% please capitalize all significant words

%%% AUTHOR:
%%% List all authors. If you wish, place grant acknowledgements in \thanks.
%%% In brackets include a short tag for each author.
\author[dion]{Dion Gijswijt}

%%% AUTHOR: Abstract goes here
\begin{abstract}
Let $a_{i1}x_1+\cdots+a_{ik}x_k=0$, $i\in[m]$ be a balanced homogeneous system of  linear equations with coefficients $a_{ij}$ from a finite field $\mathbb{F}_q$. We say that a solution $x=(x_1,\ldots, x_k)$ with $x_1,\ldots, x_k\in \mathbb{F}_q^n$ is \emph{generic} if every homogeneous balanced linear equation satisfied by $x$ is a linear combination of the given equations.  

We show that if the given system is \emph{tame}, subsets $S\subseteq \F_q^n$ without generic solutions must have exponentially small density. Here, the system is called tame if for every implied system the number of equations is less than half the number of used variables. Using a subspace sampling argument this also gives a `supersaturation result': there is a constant $c$ such that for $\epsilon>0$ sufficiently small, every subset $S\subseteq \F_q^n$ of size at least $q^{(1-\epsilon) n}$ contains $\Omega(q^{(k-m-\epsilon c)n})$ solutions as $n\to\infty$. For $q<4$ the tameness condition can be left out.

Our main tool is a modification of the slice rank method to leverage the existence of many solutions in order to obtain high rank solutions.
\end{abstract}
\end{frontmatter}

%%% AUTHOR: body of paper starts here
\section{Introduction}
Let $\F_q$ be a finite field. Consider a balanced homogeneous system of linear equations 
\begin{equation}\tag*{\mytag}
\begin{alignedat}{5}
        a_{11}x_1+&\cdots&&+a_{1k}x_k&&=0\\
        &\ \ \vdots\\
        a_{m1}x_1+&\cdots&&+a_{mk}x_k&&=0      
\end{alignedat}
\end{equation}
with coefficient matrix $A=(a_{ij})\in \F_q^{m\times k}$ and variable vectors $x_1,\ldots, x_k\in \F_q^n$.  By \emph{balanced}, we mean that $a_{i1}+\cdots+a_{ik}=0$ for all $i\in [m]$. In other words, the equations give affine dependencies between the $k$ variable vectors. We will always assume that the equations are linearly independent, i.e., $\rank(A)=m$. Note that for any $v\in \F_q^n$ we obtain a trivial solution by setting $x_i=v$ for all $i\in[k]$. 

If $x=(x_1,\ldots, x_k)$ is a solution to \mytag{}, we say that a set $S\subseteq \F_q^n$ contains the solution $x$ if $x_1,\ldots, x_k\in S$. For $n\to\infty$, we are interested in asymptotic upper bounds on the maximum size of a subset $S\subseteq \F_q^n$ that contains no `non-degenerate' solutions to \mytag. In particular, we will be interested in the case that $S$ can be shown to have exponentially small density in $\F_q^n$. In this context there are at least three natural notions of non-degeneracy that have previously been studied. A solution $x=(x_1,\ldots, x_k)$ to \mytag{} is said to be
\begin{itemize}
    \item[(a)] \emph{non-trivial} if $x_i\neq x_j$ for some $i,j\in[k]$;
    \item[(b)] \emph{a \mytag-shape\footnote{This terminology was introduced in \cite{Mimura-Tokushige-shape}.}} if $x_1,\ldots, x_k$ are pairwise distinct;
    \item[(c)] \emph{generic}\footnote{This terminology was introduced in \cite{JosseDion}} if $x$ only satisfies affine relations $\mu_1x_1+\cdots+\mu_kx_k=0$ that are linear combinations of the equations in~\mytag. 
\end{itemize}
Clearly, every \mytag-shape is a non-trivial solution. Every generic solution is a \mytag-shape, except when an equation of the form $x_{j_1}-x_{j_2}=0$ is implied by \mytag{} (in which case no \mytag-shape exists in~$\F_q^n$.) 

The question of how large a set without non-degenerate solutions can be, may be asked more generally for abelian groups instead of $\mathbb{F}_q^n$. Especially for $\ZZ/n\ZZ$ (or intervals in $\ZZ$) this is a central problem in additive number theory which includes famous results like Szemeredi's theorem on arithmetic progressions \cite{szemeredi1975sets}. Here we will solely focus on the case that the variables take values in $\mathbb{F}_q^n$. It is worth mentioning that \mytag{} always has a generic solution in $\F_q^{k-m-1}$. Hence, we have $|S|=o(q^n)$ for any $S\subseteq \F_q^n$ without generic solutions to \mytag{} as a direct consequence of the multi-dimensional density Hales-Jewett theorem~\cite{furstenberg1991density}. 

The system \mytag{} is called\footnote{The notions `moderate' and `temperate' were introduced in \cite{Mimura-Tokushige-shape} and \cite{JosseDion}, respectively.} 
\begin{itemize}
    \item \emph{moderate} if there is a $\delta>0$ such that $|S|=O(q^{(1-\delta)n})$ for any set $S\subseteq \F_q^n$ that does not contain a \mytag{}-shape,
    \item \emph{temperate} if there is a $\delta>0$ such that $|S|=O(q^{(1-\delta)n})$ for any set $S\subseteq \F_q^n$ that does not contain a generic solution to \mytag{}.
\end{itemize}
It follows that a temperate system is also moderate, provided the system does not imply an equation of the form $x_{j_1}-x_{j_2}=0$. In this paper we will show that under certain natural restrictions on \mytag, the system \mytag{} is temperate. 

Below, we mention some recent work on finding such exponential bounds for each of the three notions of non-degeneracy. In all cases, the main tool is the slice rank method or a variation thereof.

\paragraph{Non-trivial solutions}
Perhaps the simplest and most studied system is the single equation $x_1-2x_2+x_3=0$ where the solutions are precisely the $3$-term arithmetic progressions. In this case, the notions `non-trivial', `\mytag-shape' and `generic' coincide (for $q$ odd\footnote{If $q$ is even, the problem is not interesting. Indeed, $x_1,x_2,x_1$ is a `nontrivial' 3-AP for any $x_1\neq x_2$ and, moreover, no 3-AP $x_1,x_2,x_3$ exists with $x_1$, $x_2$, and $x_3$ pairwise distinct.}). A set $S\subseteq \F_3^n$ with no nontrivial $3$-term progressions is known as a \emph{cap set}. Based on the ground-breaking result of Croot, Lev, and Pach \cite{Croot-Lev-Pach}, it was shown in \cite{Ellenberg-Gijswijt} that $|S|\leq 3\cdot 2.756^n$ for any cap set $S\subseteq \F_3^n$. More generally, the authors show that any set $S\subseteq \F_q^n$ without non-trivial solutions to a given balanced equation $a_1x_1+a_2x_2+a_3x_3=0$ has exponentially small density in $\F_q^n$. In~\cite{kovac2021popular} the proof is adapted to upper bound sets in $(\F_q^2)^n$ without right isosceles triangles. 

The method used in~\cite{Ellenberg-Gijswijt}, was recast by Tao into a more elegant, symmetric form introducing the concept of \emph{slice rank}~\cite{Tao-symmetric, Tao-slice-rank}. Using the slice rank method, it is straightforward to generalize the cap set theorem to general balanced systems (see for example Theorem~1.1 in \cite{Sauermann} and also Proposition~4.12 in \cite{blasiak2016cap}). To describe the upper bound, let \[
m_{q,n,d}=|\{\alpha\in \{0,1,\ldots, q-1\}^n: \alpha_1+\cdots+\alpha_n\leq d\}|
\]
denote the number of monomials in $n$ variables of degree at most $d\in \mathbb{R}_{\geq 0}$ in which each variable has exponent at most $q-1$. Using the fact that for any integer $d$ the number of monomials of degree $d$ is the coefficient of $t^d$ in $(1+t+\dots+t^{q-1})^n$, one obtains the following upper bound.
\begin{lemma}
Let $\delta\in (0,1)$. Then  
\[
m_{q,n,\delta(q-1)n}\leq c_{q,\delta}^n, 
\]
where $c_{q,\delta}=\inf_{0<t\leq 1} (1+t+\cdots+t^{q-1})t^{-\delta(q-1)}$. Moreover, $c_{q,\delta}<q$ for $\delta<\tfrac{1}{2}$.
\end{lemma}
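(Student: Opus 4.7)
The plan is to use the standard generating-function trick from the polynomial/Chernoff-style counting arguments that underlie the slice-rank method.

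For the first inequality, fix $t \in (0, 1]$ and observe that for every $\alpha \in \{0,1,\ldots,q-1\}^n$ with $\alpha_1+\cdots+\alpha_n \le \delta(q-1)n$, the quantity $t^{\alpha_1+\cdots+\alpha_n - \delta(q-1)n}$ is at least $1$ (since $t \le 1$ and the exponent is nonpositive). Hence
\[
m_{q,n,\delta(q-1)n} \;\le\; \sum_{\alpha \in \{0,\ldots,q-1\}^n} t^{\alpha_1+\cdots+\alpha_n - \delta(q-1)n} \;=\; t^{-\delta(q-1)n}\bigl(1+t+\cdots+t^{q-1}\bigr)^n,
\]
where the equality is the usual factorization of the generating function into a product over coordinates. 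Taking the infimum over $t \in (0,1]$ gives $m_{q,n,\delta(q-1)n} \le c_{q,\delta}^n$.

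For the ``moreover'' claim, I would analyze the function $g(t) = (1+t+\cdots+t^{q-1})\,t^{-\delta(q-1)}$ near $t = 1$. At $t=1$ we have $g(1) = q$, so it suffices to show $g'(1) > 0$, which would imply $g(t) < q$ for some $t$ slightly less than $1$, and therefore $c_{q,\delta} = \inf g < q$. A short computation using $\tfrac{\mathrm d}{\mathrm d t}\log g(t)$ gives
\[
\frac{g'(1)}{g(1)} \;=\; \frac{1+2+\cdots+(q-1)}{q} - \delta(q-1) \;=\; (q-1)\Bigl(\tfrac12 - \delta\Bigr),
\]
which is strictly positive for $\delta < \tfrac12$.

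There is no real obstacle: the argument is a routine application of the generating-function upper bound together with a first-derivative check. The only minor subtlety is to justify restricting the infimum to $t\in(0,1]$ (rather than $t>0$); this is forced by the step that replaces $1$ by $t^{\text{nonpositive}}$, and it is exactly this restriction that makes the bound nontrivial and allows the derivative argument at $t=1$ to produce $c_{q,\delta}<q$.
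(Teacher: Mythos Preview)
Your proof is correct and fleshes out exactly the approach the paper hints at (the paper only remarks that the number of monomials of degree $d$ is the coefficient of $t^d$ in $(1+t+\cdots+t^{q-1})^n$ and states the lemma without further proof). Both the Chernoff-style generating-function bound and the derivative computation at $t=1$ are the standard way to obtain this inequality.
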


The slice rank method then yields the following result. 
\begin{theorem}\label{thm:slicerank}
Let $S\subseteq \F_q^n$ have no nontrivial solution to \mytag{}. Then 
\[
|S|\leq k\cdot m_{q,n,(q-1)nm/k}\leq k\cdot c_{q,m/k}^n.
\]
If $k\geq 2m+1$, we have $c_{q,m/k}<q$.
\end{theorem}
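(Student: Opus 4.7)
The plan is to apply the slice rank method to the indicator tensor for being a solution to \mytag{}. For $x_j\in\F_q^n$ with coordinates $x_{j,\ell}$, I would set
\[
T(x_1,\dots,x_k) = \prod_{i=1}^m\prod_{\ell=1}^n\bigl(1 - (a_{i1}x_{1,\ell}+\cdots+a_{ik}x_{k,\ell})^{q-1}\bigr).
\]
By Fermat's little theorem, $T$ evaluates to $1$ on solutions of \mytag{} and to $0$ otherwise. Since the system is balanced, every diagonal tuple $(v,\dots,v)$ with $v\in\F_q^n$ is a solution; since $S$ contains no nontrivial solution, these are the only solutions in $S^k$. Hence $T$ restricted to $S^k$ is a diagonal tensor with $|S|$ nonzero entries, and its slice rank on $S^k$ equals $|S|$.

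Next I would upper bound the slice rank of $T$ by expanding it as a polynomial in the $kn$ variables $x_{j,\ell}$. Reducing modulo $x_{j,\ell}^q-x_{j,\ell}$ (which preserves values on $\F_q^n$ and only lowers per-variable degrees), I may assume each variable has exponent at most $q-1$, while the total degree of $T$ remains bounded by $mn(q-1)$. I then write each monomial of $T$ as a product $\prod_{j=1}^k M_j(x_j)$ of monomials in the separate tensor slots. Since $\sum_j\deg(M_j)\leq mn(q-1)$, the pigeonhole principle guarantees some index $j$ with $\deg(M_j)\leq(q-1)nm/k$. Assigning every monomial of $T$ to one such index yields a decomposition
\[
T = \sum_{j=1}^k \sum_{M} M(x_j)\cdot g_{j,M}(x_1,\dots,\hat{x}_j,\dots,x_k),
\]
where $M$ ranges over monomials in $n$ variables of degree at most $(q-1)nm/k$ with each exponent bounded by $q-1$. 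By the definition of $m_{q,n,d}$, the inner sum has at most $m_{q,n,(q-1)nm/k}$ terms, so the slice rank of $T$ is at most $k\cdot m_{q,n,(q-1)nm/k}$.

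Comparing these two bounds yields the first inequality in the theorem; the second is immediate from the preceding lemma applied with $\delta=m/k$, and the assertion $c_{q,m/k}<q$ for $k\geq 2m+1$ follows from the same lemma since then $m/k<1/2$. This is a textbook application of the slice rank method, so no significant obstacle is anticipated. The one step requiring care is the degree-counting: one must check that after the reduction to per-variable degree at most $q-1$ the total degree is still bounded by $mn(q-1)$, which is true precisely because such reductions only decrease degrees.
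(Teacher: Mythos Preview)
Your proposal is correct and is precisely the standard slice rank argument that the paper has in mind. Note that the paper does not actually supply a proof of this theorem: it states the result as a known consequence of the slice rank method and cites \cite{Sauermann} and \cite{blasiak2016cap} for details. Your write-up matches that approach step for step (diagonal tensor from the indicator polynomial, Tao's lemma that the slice rank of a diagonal tensor equals the number of nonzero diagonal entries, and the pigeonhole decomposition of the degree-$mn(q-1)$ polynomial into slices of degree at most $(q-1)nm/k$).
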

The factor $k$ in the theorem can be removed by the `power trick' since $S_1\times S_2$ has no nontrivial solution when the same holds for each $S_i\subseteq \F_q^{n_i}$. The cap set theorem corresponds to the case $(m,k)=(1,3)$. The theorem implies that if $k\geq 2m+1$, every set $S\subseteq \F_q^n$ without non-trivial solutions to~\mytag{} has exponentially small density. When $k\leq 2m$, the slice rank method breaks down since for $d\geq (q-1)n/2$ we have $m_{q,n,d}\geq \tfrac{1}{2}q^n$, so the obtained upper bound is trivial.

For $k\geq 4$ and any prime $p\geq k$, it is unknown whether every $S\subseteq \F_p^n$ without non-trivial $k$-term progressions, i.e., without non-trivial solutions to the system
\begin{equation*}
\begin{alignedat}{1}
x_1-2x_2+x_3&=0\\
\vdots\qquad\\
x_{k-2}-2x_{k-1}+x_k&=0
\end{alignedat}
\end{equation*}
has exponentially small density for $n\to\infty$.
 
\paragraph{\mytag-shapes}
In the case that \mytag{} consists of the single equation $x_1+\cdots+x_p=0$ with coefficients in $\F_p$, where $p\geq 3$ is a prime number, the maximum size of a set $S\subseteq \F_p^n$ without \mytag{}-shape is closely related to the Erd\H{o}s-Ginzburg-Ziv constant $\mathfrak{s}(\F_p^n)$ for the abelian group $\F_p^n$. The slice rank method does not (for $p\geq 5$) yield solutions with all $x_i$ distinct. However, using a variant called \emph{partition rank}, Naslund~\cite{naslund2020exponential} showed that $S\subseteq \F_p^n$ has exponentially small density if $S$ does not contain a solution to $x_1+\cdots+x_p=0$ with the $x_i$ pairwise distinct. Combining the slice rank method with other techniques, the exponent in the upper bound was further improved in \cite{fox2018erdHos, sauermann2021size}.  

Mimura and Tokushige were the first to explicitly study moderate systems, in part motivated by analogous results over the integers by Ruzsa \cite{ruzsa1993solving}. In \cite{Mimura-Tokushige-star, Mimura-Tokushige-shape, Mimura-Tokushige-II, mimura2021solving} they showed that several specific systems and families of systems are moderate. Perhaps the simplest example is the case where \mytag{} consists of a single balanced equation in $k\geq 3$ variables. A more general theorem containing these specific examples was proved in~\cite{JosseDion}. There, it is shown that if the coefficient matrix $A$ has many pairs of linearly dependent columns, the system will be moderate. On the opposite end, it was shown in \cite{Sauermann} that for $k\geq 3m$, systems in which all $m\times m$ minors of $A$ are nonzero, are moderate.

\paragraph{Generic solutions}  It is implicit in~\cite{Sauermann} that if \mytag{} consists of a single equation, the system is temperate. A more general result is shown in~\cite{JosseDion}. There, two columns of $A$ are said to be equivalent if they are scalar multiples of one another. It is shown that \mytag{} is temperate if either (i) for each equivalence class the columns sum to zero, or (ii) the number of classes is equal to $m+1$. If \mytag{} consists of a single equation, then there is only one equivalence class and we are in case (i).
  
In \cite{geelen2015analogue}, Geelen and Nelson proved a result in extremal matroid theory that is an analogue of the Erd\H{o}s-Stone theorem for graphs. The theorem states that for a fixed set $N$ of points in the projective space $\mathrm{PG}(m-1,q)$, the maximum size $\mathrm{ex}_q(N,n)$ of a subset $A\subseteq \mathrm{PG}(n-1,q)$ not containing a copy of $N$ satisfies $\frac{\mathrm{ex}_q(N,n)}{|\mathrm{PG}(n-1,q)|}\to 1-q^{1-\chi(N)}$ as $n\to\infty$, where $\chi(N)$ is the critical number of $N$. 

In the case that $N$ is affine ($\chi(N)=1$), this only gives an upper bound $o(q^n)$ on the Tur\'an number $\mathrm{ex}_q(N,n)$. However, for the binary case it was shown in \cite{bonin2000size} that in fact $\mathrm{ex}_2(N,n)=O(2^{\alpha n})$ for some $\alpha<1$ when $N$ is affine. This is equivalent to the statement that for $q=2$ the system \mytag{} is always temperate.

\subsection*{Main results}
We show that under certain conditions, the system \mytag{} is temperate. Our main result is the following. 
\begin{AlphTheorem}\label{thm:mainA}
The balanced system \mytag{} is temperate if the coefficient matrix $A$ is tame. 
\end{AlphTheorem}
Here, the matrix $A$ is called \emph{tame} if every system implied by \mytag{} that has $m'$ independent equations, uses at least $2m'+1$ variables. The precise definition of tameness can be found in Section~\ref{sec:tame}. We note that if \mytag{} implies the equation $x_1-x_2=0$, then without loss of generality, $A$ has the following form: 
\[
\arraycolsep=2pt
A = \left(\begin{array}{c|cc}
1&-1&0\cdots 0\\\hline
\begin{array}{c}0\\[-1ex]\vdots\\0\end{array}&\multicolumn{2}{c}{B}
\end{array}\right).
\]
Then \mytag{} is temperate if and only if the system with coefficient matrix $B$ is temperate. Hence, we may assume that \mytag{} does not imply an equation of the form $x_{j_1}-x_{j_2}=0$. 

The systems that were shown to be moderate or temperate in  \cite{JosseDion, Mimura-Tokushige-star, Mimura-Tokushige-shape,  Mimura-Tokushige-II, Sauermann} have tame coefficient matrices. Hence these results are implied by Theorem~\ref{thm:mainA} (albeit with possibly much worse exponents).

Over the fields $\F_2$ and $\F_3$, the tameness condition is not necessary.
\begin{AlphTheorem}\label{thm:mainB}
The balanced system \mytag{} is temperate if $q\in\{2,3\}$.
\end{AlphTheorem}

The number of solutions of affine rank $\leq r$ to \mytag{} in the space $\F_q^n$ is $O(q^{rn})$. In particular, the  total number of solutions is $O(q^{(k-m)n})$. As part of the proof of Theorem~\ref{thm:mainA} we show a supersaturation result for solutions of a fixed affine rank. Applied to Theorem~\ref{thm:mainA} and~\ref{thm:mainB}, we obtain the following corollary.  
\begin{AlphCorollary}\label{cor:mainD}
    Suppose that the coefficient matrix of \mytag{} is tame or that $q<4$. Then there exist $\delta>0$ and $C>0$ such that for all positive $\delta'<\delta$ the following holds: if $S\subseteq \F_q^n$ has size $|S|\geq q^{(1-\delta')n}$, then $S$ contains $\Omega(q^{(k-m-C\delta')n})$ solutions to \mytag{}. 
\end{AlphCorollary}

Theorems~\ref{thm:mainA} and~\ref{thm:mainB} can also be expressed in terms of excluding affine configurations.
\begin{AlphCorollary}\label{cor:mainC}
Let $z_1,\ldots, z_k\in \F_q^d$ be distinct. Suppose $q<4$ or that for every $i\in[k]$ the set $\{z_1,\ldots, z_k\}$ is the union of two affinely independent subsets containing $z_i$. Then every subset $S\subseteq \F_q^n$ containing no affine copy of $(z_1,\ldots, z_k)$ has exponentially small density in $\F_q^n$ (as $n\to \infty$).
\end{AlphCorollary}

\subsection*{Outline paper and proof}
In Section~\ref{sec:prelim}, we set up our notation and preliminaries. 

In Section~\ref{sec:tame}, we introduce \emph{tame} matrices. We give a good characterization for being tame using the matroid union theorem and show that any tame $m\times k$ matrix can be extended to a tame $m'\times (2m'+1)$ matrix. This allows us to restrict the proof of Theorem~\ref{thm:mainA} to the case $k=2m+1$. The main result in this section is Proposition~\ref{prop:disjointmaxrank}, which shows that for any solution $x$ of rank $r$ that is not generic ($r<(2m+1)-m$), there are two disjoint affine bases of $\{x_i:i\in [2m+1]\}$. This is a crucial part of the proof of Theorem~\ref{thm:mainA}, and the place where the tameness of $A$ is used.

In Section~\ref{sec:super} we show a supersaturation result for solutions of a given affine rank. If for some $\delta>0$ the condition $|S|\geq q^{(1-\delta)n}$ forces solutions of affine rank $\geq r$, then for $\delta'<\delta$ the condition $|S|\geq q^{(1-\delta')n}$ forces $S$ to have $\Omega(q^{r-c\delta'}n)$ solutions of affine rank $\geq r$ (for some constant $c$). This is used in the proof of Theorem~\ref{thm:mainA}. 

In Section~\ref{sec:thmA} we prove our main theorem: Theorem~\ref{thm:mainA}. It uses ideas from the slice rank method (in particular the Croot-Lev-Pach lemma), the supersaturation result and Proposition~\ref{prop:disjointmaxrank}. The main idea is to assume for contradiction that $r<(2m+1)-m$ is the maximum affine rank for which we can force solutions in $S$. We take the indicator function of the solution set $f:S^k\to \F_q$ and two suitable disjoint sets $I,J\subseteq [k]$ of size $r$. Summing $f$ against a suitable random $g:S^{[k]\setminus (I\cup J)}\to \F_q$, we obtain a function $M(f):S^I\times S^J\to \F_p$ that can be interpreted as an $|S|^r\times |S|^r$ matrix. Using the CLP lemma, the rank of $M(f)$ can be shown to be `small'. On the other hand, Proposition~\ref{prop:disjointmaxrank} implies that $M(f)$ is the sum of low rank matrices and an almost diagonal matrix that is `large' by the supersaturation result, resulting in a contradiction.

In Section~\ref{sec:thmB} we prove Theorem~\ref{thm:mainB}. The proof only uses the cap set theorem and an induction trick to lift it to a multi-dimensional variant.    

In Section~\ref{sec:aff} we briefly discuss the relation with affine configurations of points and show Corollary~\ref{cor:mainC}.

Finally, in Section~\ref{sec:conclusion} we conclude with Corollary~\ref{cor:mainD} and an open problem.

\section{Preliminaries and notation}\label{sec:prelim}
We denote the set of nonnegative integers by $\NN$. For any positive integer~$n$, we denote $[n]=\{1,\ldots, n\}$. Throughout the paper, $q$ will denote a (fixed) prime power and $\F_q$ denotes the field of order $q$. By $\F$ we will denote an arbitrary field. For a linear space $\F^n$ we denote by $e_1,\ldots, e_n$ the standard basis vectors. We will use the following basic lemma from linear algebra.
\begin{lemma}\label{lem:support}
Let $S$ be a set and let $V\subseteq \F^S$ be a linear subspace of dimension $d\in \NN$. Then there is a subset $T\subseteq S$ of size $|T|=d$ such that $f\mapsto \restr{f}{T}$ is an isomorphism between $V$ and $\F^T$. 
\end{lemma}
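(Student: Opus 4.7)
The plan is to reduce this to the standard fact that the row rank and column rank of a matrix agree, working with a fixed basis of $V$.

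First I would pick any basis $f_1,\ldots,f_d$ of $V$, and think of these as the rows of a matrix $M$ indexed by rows $[d]$ and columns $S$ (when $S$ is infinite, we simply work with linear combinations formally; the key fact below still applies). Since $f_1,\ldots,f_d$ are linearly independent in $\F^S$, the row rank of $M$ equals $d$. By equality of row rank and column rank, there exist $d$ columns $t_1,\ldots,t_d \in S$ such that the vectors $(f_1(t_j),\ldots,f_d(t_j))^\transp$ for $j\in[d]$ are linearly independent in $\F^d$. Let $T=\{t_1,\ldots,t_d\}$.

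Next I would verify that the restriction map $\varphi\colon V\to \F^T$, $f\mapsto \restr{f}{T}$, is an isomorphism. Injectivity: if $\sum_i c_i f_i$ restricts to $0$ on $T$, then $\sum_i c_i f_i(t_j)=0$ for all $j$, which by the linear independence of the chosen columns forces all $c_i=0$. Hence $\ker\varphi=0$. Since $\dim V=d=\dim\F^T$ is finite, an injective linear map between spaces of equal finite dimension is automatically surjective, so $\varphi$ is an isomorphism.

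If one prefers a proof avoiding the column-rank statement for possibly infinite $S$, the same conclusion follows by a short induction on $d$: for $d=0$ take $T=\emptyset$; for $d\geq 1$, any nonzero $f\in V$ has some $s\in S$ with $f(s)\neq 0$, and the hyperplane $V'=\{g\in V:g(s)=0\}$ has dimension $d-1$, to which we apply the induction hypothesis and then adjoin $s$. The only thing to check is that the enlarged set $T'=T\cup\{s\}$ still yields an injective restriction, which is immediate from the construction. I do not expect any real obstacle; the only mild point is being careful about $S$ possibly being infinite, which the inductive version handles cleanly.
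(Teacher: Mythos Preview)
The paper does not actually supply a proof of this lemma; it is stated as a ``basic lemma from linear algebra'' and left unproved. Your argument is correct: choosing a basis $f_1,\ldots,f_d$ of $V$ and observing that the columns $(f_1(s),\ldots,f_d(s))^\transp$, $s\in S$, span $\F^d$ (else some nontrivial combination $\sum_i c_i f_i$ would vanish identically) yields $d$ linearly independent columns, and the restriction to the corresponding set $T$ is then an injective linear map between $d$-dimensional spaces, hence an isomorphism. The inductive variant you sketch is also fine and handles infinite $S$ without appeal to matrix rank terminology.
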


\subsection*{Asymptotic notation}
The asymptotic notations $O(\cdot)$, $o(\cdot)$, $\Omega(\cdot)$ used in this paper will always be with respect to the dimension~$n$ of the space $\F_q^n$. The notation $g(n)=\Omega(f(n))$ refers to the `Knuth big omega notation' common in computer science. In most cases, the function $g$ is implicit. For instance, by the statement `For all $S\subseteq \F_q^n$ with property $P$ we have $|S|=\Omega(g(n))$.' we mean that there exist a constant $C>0$ and an integer $n_0$ such that for all $n\geq n_0$ and every $S\subseteq \F_q^n$ with property $P$ we have $|S|\geq Cg(n)$.

\subsection*{Mulivariate polynomials}
For $\alpha\in \NN^n$ we denote by $x^\alpha=x_1^{\alpha_1}\cdots x_n^{\alpha_n}$ the monomial in variables $x_1,\ldots, x_n$ of \emph{degree} $|\alpha|:=\alpha_1+\cdots+\alpha_n$. More generally, if $\alpha=(\alpha_1,\ldots, \alpha_k)$, where $\alpha_1,\ldots, \alpha_k\in\NN^n$, then we write 
\[
|\alpha|:=|\alpha_1|+\cdots+|\alpha_k|=\sum_{i=1}^k\sum_{j=1}^n \alpha_{ij}\quad \text{and}\quad x^\alpha:=x_1^{\alpha_1}\cdots x_k^{\alpha_k}=\prod_{i=1}^{k}\prod_{j=1}^n x_{ij}^{\alpha_{ij}}.    
\]
Since $x^q=x$ for all $x\in \F_q$, we will restrict our polynomials to only have monomials in which all variables occur with exponent at most $q-1$. We write $\F_q[x_1,\ldots, x_n]'$ for the linear space spanned by the monomials $x^\alpha$, $\alpha\in \{0,1,\ldots, q-1\}^n$. For $d\in \RR$ we define  $M_{q,n,d}=\{\alpha\in \{0,1,\ldots, q-1\}^n:|\alpha|\leq d\}$ and $m_{q,n,d}=|M_{q,n,d}|$. 

The following key lemma is due to Croot, Lev and Pach (see \cite{Croot-Lev-Pach}).
\begin{lemma}[CLP lemma]
Let $f\in \F_q[x_1,\ldots, x_n,y_1,\ldots, y_n]'$ have degree at most $d$. Then the $q^n\times q^n$-matrix~$M$ given by $M_{ab}:=f(a,b)$ for all $a,b\in \F_q^n$, has rank at most $2m_{q,n,d/2}$. 
\end{lemma}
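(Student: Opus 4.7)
The plan is to split $f$ into two parts based on which side of the tensor product is ``low degree'', and bound the rank of each part by exhibiting an explicit low-rank decomposition of the associated matrix.

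First I would write $f$ as an $\F_q$-linear combination of monomials $x^\alpha y^\beta$ with $\alpha,\beta\in\{0,1,\ldots,q-1\}^n$ and $|\alpha|+|\beta|\le d$. For every such monomial, either $|\alpha|\le d/2$ or $|\beta|< d/2$ (if $|\alpha|>d/2$, then $|\beta|<d/2$ since the total degree is at most $d$). This suggests writing $f=f_1+f_2$, where
\[
f_1(x,y)=\sum_{\substack{\alpha,\beta\\ |\alpha|\le d/2}} c_{\alpha\beta}\,x^\alpha y^\beta,\qquad f_2(x,y)=\sum_{\substack{\alpha,\beta\\ |\alpha|> d/2}} c_{\alpha\beta}\,x^\alpha y^\beta,
\]
so that each monomial appears in exactly one of the two sums, and every monomial in $f_2$ satisfies $|\beta|<d/2$.

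Next I would bound the ranks of the two corresponding matrices $M_1,M_2$ with entries $f_1(a,b)$ and $f_2(a,b)$. Regrouping $f_1$ as $\sum_{|\alpha|\le d/2} x^\alpha\bigl(\sum_\beta c_{\alpha\beta}y^\beta\bigr)$, the matrix $M_1$ decomposes as a sum of at most $|M_{n,d/2}|=m_{q,n,d/2}$ rank-$1$ matrices of the form $(a^\alpha)_a\otimes(g_\alpha(b))_b$, so $\rank(M_1)\le m_{q,n,d/2}$. Symmetrically, regrouping $f_2$ by the $y$-exponent yields a decomposition of $M_2$ as a sum of at most $m_{q,n,d/2}$ rank-$1$ matrices as well (the number of $\beta$ with $|\beta|<d/2$ is at most $m_{q,n,d/2}$). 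Subadditivity of rank then gives $\rank(M)\le \rank(M_1)+\rank(M_2)\le 2m_{q,n,d/2}$.

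There is no real obstacle here: the only thing to be careful about is the boundary case when $d$ is not an integer (or when monomials sit exactly at $|\alpha|=d/2$), which is handled cleanly by the asymmetric split above using $\le d/2$ on one side and $>d/2$ on the other, and by noting that $|\beta|<d/2$ implies $|\beta|\le d/2$ so that the count is bounded by $m_{q,n,d/2}$ in either case.
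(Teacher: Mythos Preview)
Your argument is correct and is essentially the paper's own proof: split the monomials according to whether the $x$-degree or the $y$-degree is at most $d/2$, regroup, and interpret each sum as at most $m_{q,n,d/2}$ rank-one matrices. The only cosmetic difference is that you fix a specific tie-breaking rule (putting all monomials with $|\alpha|\le d/2$ into $f_1$), whereas the paper leaves the assignment of borderline monomials arbitrary.
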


Since we will use similar arguments later, we give the proof here.
\begin{proof}
Write $f=\sum_{\alpha,\beta} c_{\alpha\beta}x^\alpha y^\beta$
where the sum is over all $\alpha,\beta\in\{0,\ldots,q-1\}^n$ that satisfy $|\alpha|+|\beta|\leq d$, and the $c_{\alpha\beta}\in \F_q$ are constants. Since each term has $|\alpha|\leq d/2$ or $|\beta|\leq d/2$, we can group the terms accordingly (if both $|\alpha|,|\beta|\leq d/2$ we can choose either group) and obtain
\[
f(x,y)=\sum_{|\alpha|\leq d/2} x^\alpha f_\alpha(y)+\sum_{|\beta|\leq d/2} y^\beta g_\beta(x)
\]
for certain $f_\alpha\in \F_q[y_1,\ldots, y_n]'$ and $g_\beta\in \F_q[x_1,\ldots, x_n]'$. Each of the terms $x^\alpha f_\alpha(y)$ and $y^\beta g_\beta(x)$ corresponds to a rank $1$ matrix (as it is the product of a function of $x$ and a function of $y$). Hence, $M$ has rank at most $2m_{q,n,d/2}$.
\end{proof}

\subsection*{Balanced linear systems}
We will consider systems of linear equations of the form 
\begin{equation}\tag*{\mytag}
\begin{alignedat}{5}
        a_{11}x_1+&\cdots&&+a_{1k}x_k&&=0\\
        &\ \ \vdots\\
        a_{m1}x_1+&\cdots&&+a_{mk}x_k&&=0      
\end{alignedat}
\end{equation}
where the coefficients $a_{ij}$ are elements of a field $\F$ and $x_1,\ldots,x_k\in \F^n$ are variable vectors. We will use the shorthand notation $x=(x_1,\ldots, x_k)\in \F^{n\times k}$, $A=(a_{ij})\in\F^{m\times k}$ and write the system of linear equations as\footnote{We can think of $x=(x_1,\ldots, x_k)$ as an $n\times k$ matrix. Then $Ax^\transp=0$, where we take the usual matrix product and $0$ denotes the $m\times k$ zero matrix, corresponds to system \mytag{}.} $Ax^\transp=0$. If $\sum_{j\in[k]} a_{ij}=0$ for all $i\in[m]$, we say that the matrix $A$ is \emph{row-balanced} and that the system $Ax^\transp=0$ is a \emph{balanced} system. 

\subsection*{Matroid terminology}
Let $A\in \F^{m\times k}$. For a subset $U\subseteq [k]$, we denote by $r_A(U)$ the rank of the submatrix of $A$ formed by the columns indexed by $U$. We have $r_A(U)=|U|$ if and only if the columns of $A$ indexed by $U$ are linearly independent. We say that $U$ is a \emph{basis} of $A$ if $|U|=r_A(U)=\rank(A)$. Note that any set $I\subseteq [k]$ with $r_A(I)=|I|$ is a subset of a basis. 

The pair $M=([k], \mathcal{I})$ where $\mathcal{I}=\{I\subseteq [k]: r_A(I)=|I|\}$ is a linear matroid with collection of independent sets $\mathcal{I}$ and rank function $r_A$. The following proposition follows directly by applying the matroid union theorem to two copies of $M$ (see for instance Corollary 42.1b in \cite{Schrijver}). It gives a min-max formula for the maximum size of the union of two sets of linearly independent columns of the matrix $A$.

\begin{proposition}\label{prop:matroidunion}
Let $A\in \F^{m\times k}$. Then 
\[
\max\{|I\cup J|: I,J\subseteq [k],\ r_A(I)=I,\ r_A(J)=J\}\ =\ \min\{k-|U|+2r_A(U): U\subseteq [k]\}. 
\]
\end{proposition}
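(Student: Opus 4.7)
The plan is to derive Proposition~\ref{prop:matroidunion} as an immediate consequence of the matroid union theorem applied to two copies of the linear matroid $M=([k],\mathcal{I})$ defined by $A$, exactly as the author hints.

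First I would reduce the left-hand side to a disjoint-union problem. If $I,J\subseteq[k]$ are both independent in $M$, then $J':=J\setminus I$ is a subset of the independent set $J$ and hence is also independent, and clearly $I\cup J=I\cup J'$ with $I\cap J'=\emptyset$. Thus the maximum on the left equals
\[
\max\bigl\{|I|+|J|\ :\ I,J\subseteq [k]\text{ disjoint},\ r_A(I)=|I|,\ r_A(J)=|J|\bigr\},
\]
which is precisely the rank $r_{M\vee M}([k])$ of the union of two copies of $M$ on the full ground set $[k]$.

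Second, I would invoke the matroid union theorem (Corollary~42.1b in \cite{Schrijver}): for matroids $M_1,M_2$ on a common ground set $E$ with rank functions $r_1,r_2$,
\[
r_{M_1\vee M_2}(E)=\min_{U\subseteq E}\bigl(|E\setminus U|+r_1(U)+r_2(U)\bigr).
\]
Specialising to $M_1=M_2=M$, $r_1=r_2=r_A$, and $E=[k]$ gives
\[
r_{M\vee M}([k])=\min_{U\subseteq [k]}\bigl(k-|U|+2r_A(U)\bigr),
\]
which is exactly the right-hand side of the proposition.

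For transparency I would also record the easy direction by hand, to avoid leaving it entirely to the cited black box. For any candidate $I,J,U$ with $I,J$ independent in $M$, the restrictions $I\cap U$ and $J\cap U$ are independent subsets of the columns indexed by $U$, so $|I\cap U|\le r_A(U)$ and $|J\cap U|\le r_A(U)$; meanwhile $|(I\cup J)\setminus U|\le|[k]\setminus U|=k-|U|$. Adding these gives $|I\cup J|\le(k-|U|)+2r_A(U)$ for every $U$, which is the $(\le)$ inequality. The nontrivial $(\ge)$ direction is supplied by the matroid union theorem via a standard matroid-partition / augmenting-sequence argument, so there is no real obstacle in this proof beyond reducing cleanly to the disjoint case and citing matroid union in the correct form.
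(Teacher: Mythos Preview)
Your proposal is correct and follows exactly the approach the paper takes: the proposition is stated as a direct consequence of the matroid union theorem applied to two copies of the linear matroid of $A$, with the citation to Corollary~42.1b in \cite{Schrijver}. Your added reduction to disjoint $I,J$ and the explicit verification of the easy $(\le)$ direction are fine elaborations but not required by the paper.
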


\subsection*{Affine rank and generic solutions}
Given vectors $x_1,\ldots, x_k\in \F^n$, their \emph{affine rank} $\arank \{x_1,\ldots, x_k\}$ is the maximum number of affinely independent vectors in $\{x_1,\ldots, x_k\}$. In other words,
\[
\arank \{x_1,\ldots, x_k\}=\rank(X),\text{ where }X=\begin{pmatrix}1&\cdots&1\\x_1&\cdots&x_k\end{pmatrix}\in \F^{(n+1)\times k}.
\]
For $x=(x_1,\ldots, x_k)$ we will also denote $\arank(x)=\arank (x_1,\ldots, x_k)=\arank \{x_1,\ldots, x_k\}$. 

\begin{definition}
	For $(x_1,\ldots,x_k) \in (\F^n)^k$, let
	\[ \Annbal(x_1,\ldots, x_k) = \{(\mu_1,\ldots, \mu_k)\in \F_q^k : \mu_1x_1+\cdots+\mu_kx_k=0,\quad \mu_1+\cdots+\mu_k=0\}. \]
	So the elements of $\Annbal(x_1,\ldots, x_k)$ correspond to the affine relations between $x_1,\ldots, x_k$.
\end{definition}

\begin{definition}
If $x=(x_1,\ldots, x_k)$ is a solution to $Ax^\transp=0$, we say that $(x_1,\ldots, x_k)$ is a \emph{generic} solution if $\Annbal(x_1,\ldots, x_k)=\rowspace(A)$.
\end{definition}

\begin{lemma}
	\label{lem:affine-rank-nullity}
	Let $\F$ be a field and let $x_1,\ldots,x_k \in \F^n$. Then
	\[ \arank(x_1,\ldots,x_k) + \dim(\Annbal(x_1,\ldots,x_k)) = k. \]
\end{lemma}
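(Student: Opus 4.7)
The plan is to apply the rank–nullity theorem to the augmented matrix
\[
X = \begin{pmatrix} 1 & \cdots & 1 \\ x_1 & \cdots & x_k \end{pmatrix} \in \F^{(n+1)\times k}
\]
viewed as a linear map $\F^k \to \F^{n+1}$ that sends $\mu = (\mu_1,\ldots,\mu_k)^\transp$ to $X\mu$. The definition of affine rank given just above the lemma states exactly that $\arank(x_1,\ldots,x_k) = \rank(X)$, so it suffices to identify the kernel of this map with $\Annbal(x_1,\ldots,x_k)$.

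The identification is essentially bookkeeping: $X\mu = 0$ says that the top row of $X$ annihilates $\mu$ (that is, $\mu_1 + \cdots + \mu_k = 0$) and simultaneously that the remaining $n$ rows annihilate $\mu$ (that is, $\mu_1 x_1 + \cdots + \mu_k x_k = 0$). These are precisely the two conditions defining $\Annbal(x_1,\ldots,x_k)$, so $\ker X = \Annbal(x_1,\ldots,x_k)$ as subspaces of $\F^k$.

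Applying rank–nullity to $X$ then gives
\[
\rank(X) + \dim(\ker X) = k,
\]
which translates to the claimed identity $\arank(x_1,\ldots,x_k) + \dim(\Annbal(x_1,\ldots,x_k)) = k$.

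There is no real obstacle here; the only subtlety is making sure the top row of $X$ (the all-ones row) is properly accounted for, which is why the balanced condition $\mu_1 + \cdots + \mu_k = 0$ appears naturally in the kernel description and matches the definition of $\Annbal$. So the proof is essentially a one-line invocation of rank–nullity once one has set up the right matrix.
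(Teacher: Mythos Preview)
Your proof is correct and follows essentially the same approach as the paper: form the augmented matrix with the all-ones row on top, identify its rank with $\arank(x_1,\ldots,x_k)$ and its kernel with $\Annbal(x_1,\ldots,x_k)$, and apply rank--nullity.
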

\begin{proof}
	Let $A \in \F^{(n+1)\times k}$ be the matrix
	\[ A = \begin{pmatrix} 1 & \cdots & 1 \\ x_1 & \cdots & x_k\end{pmatrix}. \]
	For $I\subseteq [k]$ the vectors $x_i,\ i\in I$ are affinely independent if and only if the columns of $A$ indexed by $I$ are linearly independent. So $\rank(A) = \arank(x_1,\ldots,x_k)$. Since $\ker(A)=\Annbal(x_1,\ldots,x_k)$, the result follows from the rank-nullity theorem.
\end{proof}

\begin{lemma}\label{lem:generic}
    Let $A\in \F^{m\times k}$ be a row-balanced matrix of rank $m$. Let $x=(x_1,\ldots, x_k)$ be a solution to $Ax^\transp=0$. Then $\arank(x_1,\ldots,x_k) \leq k - m$. Equality holds if and only if $(x_1,\ldots,x_k)$ is a generic solution of $Ax^\transp=0$.
\end{lemma}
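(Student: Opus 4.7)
The plan is to combine the rank-nullity identity from Lemma~\ref{lem:affine-rank-nullity} with the two hypotheses on $A$ (row-balanced and rank $m$) and the fact that $x$ solves $Ax^\transp = 0$. The key observation is that under these hypotheses, the row space of $A$ sits inside $\Annbal(x_1,\ldots,x_k)$, and the dimension count then forces the stated inequality, with equality reducing to the definition of generic.

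First I would check that $\rowspace(A) \subseteq \Annbal(x_1,\ldots,x_k)$. Let $\mu = (\mu_1,\ldots,\mu_k)$ be any row of $A$. Since $A$ is row-balanced, $\mu_1 + \cdots + \mu_k = 0$. Since $x$ solves $Ax^\transp = 0$, we have $\mu_1 x_1 + \cdots + \mu_k x_k = 0$. By definition of $\Annbal$, this means $\mu \in \Annbal(x_1,\ldots,x_k)$. Taking linear combinations, the entire row space of $A$ lies in $\Annbal(x_1,\ldots,x_k)$.

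Because $\rank(A) = m$, this inclusion yields $\dim(\Annbal(x_1,\ldots,x_k)) \geq m$. Applying Lemma~\ref{lem:affine-rank-nullity} gives
\[ \arank(x_1,\ldots,x_k) = k - \dim(\Annbal(x_1,\ldots,x_k)) \leq k - m, \]
which is the first claim.

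Finally, equality $\arank(x_1,\ldots,x_k) = k - m$ holds if and only if $\dim(\Annbal(x_1,\ldots,x_k)) = m$. Combined with the inclusion $\rowspace(A) \subseteq \Annbal(x_1,\ldots,x_k)$ and the fact that $\dim(\rowspace(A)) = m$, equality of dimensions forces $\rowspace(A) = \Annbal(x_1,\ldots,x_k)$, which is precisely the condition that $x$ is a generic solution. There is no real obstacle here; the proof is essentially a dimension count, and the only thing to be careful about is making the inclusion $\rowspace(A) \subseteq \Annbal(x_1,\ldots,x_k)$ explicit so that the conversion from an inequality to an equality of subspaces is justified.
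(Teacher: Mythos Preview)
Your proof is correct and follows essentially the same approach as the paper: establish the inclusion $\rowspace(A)\subseteq\Annbal(x_1,\ldots,x_k)$, apply Lemma~\ref{lem:affine-rank-nullity} for the inequality, and then use the dimension count to characterize equality as the generic case. The only difference is that you spell out more explicitly why the row-balanced hypothesis is needed for the inclusion, which the paper leaves implicit.
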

\begin{proof}
	Since $Ax^\transp=0$, the row space of $A$ is contained in $\Annbal(x_1,\ldots,x_k)$.
	Therefore we have $m = \rank(A) \leq \dim(\Annbal(x_1,\ldots,x_k))$, so it follows from Lemma~\ref{lem:affine-rank-nullity} that
	\[ \arank(x_1,\ldots,x_k) = k - \dim(\Annbal(x_1,\ldots,x_k)) \leq k - m. \]
	Clearly we have equality if and only if the row space of $A$ is equal to $\Annbal(x_1,\ldots,x_k)$, which is equivalent to $x$ being a generic solution. 
\end{proof}

\begin{lemma}\label{lem:lowdim}
Let $A\in \F^{m\times k}$ be a balanced matrix of rank $m$. Then $Ax^\transp=0$ has a generic solution in $\F^{k-m-1}$. 
\end{lemma}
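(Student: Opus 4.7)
The plan is to reduce the statement to a rank condition via Lemma~\ref{lem:generic} and then write the solution down explicitly using a basis of $\ker(A)$. By Lemma~\ref{lem:generic}, a balanced solution $x=(x_1,\ldots,x_k)$ is generic exactly when $\arank(x_1,\ldots,x_k) = k-m$, i.e., when the $(k-m)\times k$ matrix
\[ M_x := \begin{pmatrix} 1 & \cdots & 1 \\ x_1 & \cdots & x_k \end{pmatrix} \]
has full row rank. So it suffices to construct $x_1,\ldots,x_k \in \F^{k-m-1}$ with $Ax^\transp=0$ and $\rank(M_x)=k-m$.

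The key observation is that, viewing $x$ as the $(k-m-1)\times k$ matrix whose $j$-th column is $x_j$, the condition $Ax^\transp=0$ says precisely that every row of this matrix lies in $\ker(A)$. Since $A$ is balanced, $\one \in \ker(A)$, and since $\rank(A)=m$ we have $\dim\ker(A)=k-m$. I would therefore extend $\one$ to a basis $\one = v_1, v_2,\ldots, v_{k-m}$ of $\ker(A)$, and take the $k-m-1$ ``extra'' basis vectors $v_2,\ldots,v_{k-m}$ as the rows of the matrix whose columns define $x_1,\ldots,x_k$.

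With this choice, $Ax^\transp=0$ holds automatically because each row of the matrix is a kernel vector, and $M_x$ is exactly the $(k-m)\times k$ matrix with rows $\one^\transp, v_2^\transp,\ldots, v_{k-m}^\transp$, which has rank $k-m$ by construction. Hence $x$ is a generic solution by Lemma~\ref{lem:generic}. No serious obstacle arises: the construction is explicit and purely linear-algebraic, and the dimension $k-m-1$ is tight in exactly the sense that one of the $k-m$ basis vectors of $\ker(A)$ is ``absorbed'' into the augmenting all-ones row of $M_x$, leaving $k-m-1$ to play the role of the coordinates of the $x_j$.
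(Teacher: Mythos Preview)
Your proof is correct and follows essentially the same approach as the paper: construct an explicit solution and verify via Lemma~\ref{lem:generic} that its affine rank is $k-m$. The only cosmetic difference is that the paper brings $A$ into the form $(I\mid B)$ and chooses affinely independent values for the free variables $x_{m+1},\ldots,x_k$, whereas you work directly with a basis of $\ker(A)$ extending $\one$; unwinding either construction yields the other.
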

\begin{proof}
By permuting the $k$ columns of $A$, and performing elementary row-operations, we may assume that $A$ is of the form $A=\begin{pmatrix}I&B\end{pmatrix}$. Let $z_{m+1},\ldots, z_k\in \F^{k-m-1}$ be affinely independent (e.g. $z_k=0$, $z_{m+i}=e_i$ for $i\in [k-m-1]$) and set $z_i=-\sum_{j\in [k-m]} B_{ij}z_j$ for $i\in [m]$. Then $z=(z_1,\ldots, z_k)$ is a solution to $Ax^\transp=0$ and $\arank(z)=k-m$. So by Lemma~\ref{lem:generic}, $z$ is a generic solution to $Ax^\transp=0$.  
\end{proof}

\begin{lemma}\label{lem:bases}
Let $A\in \F^{m\times k}$ be a row-balanced matrix of rank $m$. Let $x=(x_1,\ldots, x_k)$ be a solution to $Ax^\transp=0$ such that $\arank(x)=k-m$. Let $U\subseteq [k]$ have size $|U|=m$. Then 
\[
\text{$U$ is a basis of $A$} \iff \arank \{x_i:i\in [k]\setminus U\}=k-m. 
\]
\end{lemma}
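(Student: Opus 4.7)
The plan is to reduce the statement to a dimension computation via Lemma~\ref{lem:affine-rank-nullity}. Write $V=[k]\setminus U$, so $|V|=k-m$. Since $\arank\{x_i:i\in V\}\leq |V|=k-m$ always, the inequality goes the right way for free, and I only need to pin down when equality holds. By Lemma~\ref{lem:affine-rank-nullity} applied to the subfamily $(x_i)_{i\in V}$, equality holds precisely when $\dim \Annbal(x_i:i\in V)=0$.

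Next I would identify the space $\Annbal(x_i:i\in V)$ with the subspace of $\rowspace(A)$ supported on $V$. Concretely, any affine relation $\sum_{i\in V}\mu_i x_i=0$, $\sum_{i\in V}\mu_i=0$ extends (by $0$ on $U$) to an element of $\Annbal(x_1,\ldots,x_k)$, and conversely the restriction to $V$ of any element of $\Annbal(x_1,\ldots,x_k)$ supported on $V$ is such an affine relation. Because $x$ has $\arank(x)=k-m$, Lemma~\ref{lem:generic} gives $\Annbal(x_1,\ldots,x_k)=\rowspace(A)$. So
\[
\Annbal(x_i:i\in V)\ \cong\ \{w\in \rowspace(A):\supp(w)\subseteq V\}.
\]

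Now I would compute the dimension on the right via rank-nullity applied to the restriction map $\rho_U:\rowspace(A)\to \F^U$, $w\mapsto w|_U$. The kernel of $\rho_U$ is exactly the subspace of $\rowspace(A)$ supported on $V$, while the image of $\rho_U$ is the row space of the submatrix $A_U$ (columns indexed by $U$), which has dimension $r_A(U)$. Hence
\[
\dim\Annbal(x_i:i\in V)=\dim\rowspace(A)-r_A(U)=m-r_A(U).
\]
Combining with Lemma~\ref{lem:affine-rank-nullity} yields $\arank\{x_i:i\in V\}=(k-m)-(m-r_A(U))=k-2m+r_A(U)$. This equals $k-m$ if and only if $r_A(U)=m$, which, since $|U|=m=\rank(A)$, is the definition of $U$ being a basis of $A$.

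There is no real obstacle here; the only point requiring care is the clean translation between affine relations on the subfamily and elements of $\rowspace(A)$ with prescribed support, which is handled by the extend-by-zero correspondence and the genericity hypothesis via Lemma~\ref{lem:generic}.
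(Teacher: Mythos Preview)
Your proof is correct and in fact yields a bit more than stated: the exact formula $\arank\{x_i:i\in V\}=k-2m+r_A(U)$. Your route is different from the paper's. The paper argues concretely by row-reducing $A$ so that the submatrix on $U$ has the form $\left(\begin{smallmatrix}I_r&B\\0&0\end{smallmatrix}\right)$ with $r=r_A(U)$: if $r=m$ each $x_j$ with $j\in U$ is an affine combination of $\{x_i:i\in V\}$ (so the affine rank does not drop), while if $r<m$ the last row gives an explicit affine relation among $\{x_i:i\in V\}$. Your argument is more structural, leveraging Lemma~\ref{lem:generic} to identify $\Annbal(x)$ with $\rowspace(A)$ and then reading off $\dim\Annbal(x_i:i\in V)$ as the kernel of the restriction map to $\F^U$. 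The paper's approach is quicker and self-contained; yours makes the dependence on $r_A(U)$ transparent and avoids any explicit normal form, at the cost of invoking the preliminary lemmas.
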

\begin{proof}
Let $r=r_A(U)$ be the rank of the submatrix induced by the columns in $U$. By applying row operations to $A$ (and permuting columns in $U$) we may assume that this submatrix is equal to 
\[
\begin{pmatrix}I_r&B\\0&0\end{pmatrix}
\]
where $I_r$ is the $r\times r$ identity matrix. 

If $U$ is a basis of $A$, then $r=m$ and the system $Ax^\transp =0$ expresses for every $j \in U$ the vector $x_j$ as an affine combination of $\{x_i:i\in [k]\setminus U\}$. So $\arank \{x_i:i\in [k]\setminus U\}=\arank \{x_i:i\in [k]\}=k-m$.

If $U$ is not a basis of $A$, then $r<m$ and hence the equation $a_{m1}x_1+\cdots+a_{mk}x_k$ gives an affine relation between the vectors in $\{x_i:i\in[k]\setminus U\}$ as $a_{mj}=0$ for all $j\in U$. So $\arank \{x_j:j\in [k]\setminus U\}<k-m$.
\end{proof}

\section{Tame matrices}\label{sec:tame}
To show that a system $Ax^\transp=0$ is temperate, we will rely on the slice rank method not only for the system $Ax^\transp=0$ itself, but also for implied systems. For this reason, we want the number of variables in an implied system to be more than twice the number of equations. If this is the case, we say that $A$ is \emph{tame}. 

\begin{definition}
Let $A\in \F^{m\times k}$ be a matrix of rank $m$. We say that $A$ is \emph{tame} if, by elementary row-operations and column permutations, we cannot bring $A$ into the form  $\left(\begin{smallmatrix}A'&0\\ B & C \end{smallmatrix}\right)$ where $A'\in \F^{m'\times k'}$ and $k'\leq 2m'$. 
\end{definition}

\begin{lemma}\label{tamechar}
Let $A\in \F^{m\times k}$ be a matrix of rank $m$. Then, the following are equivalent:
\begin{itemize}
    \item[\textup{(i)}] $A$ is tame.
    \item[\textup{(ii)}] For all $U\subsetneq [k]$ we have $2r_A(U)\geq 2m+1-k+|U|$. 
    \item[\textup{(iii)}] For all $i\in [k]$ the set $[k]\setminus\{i\}$ contains two disjoint bases of $A$.
\end{itemize}
\end{lemma}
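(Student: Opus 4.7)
The plan is to prove (i) $\Leftrightarrow$ (ii) directly from unpacking the block-form definition, and (ii) $\Leftrightarrow$ (iii) using Proposition~\ref{prop:matroidunion}. For (i) $\Leftrightarrow$ (ii), I would reformulate the block-form condition in matroid language. After permuting columns so the first $k'$ form a subset $U \subseteq [k]$, the existence of row operations killing the top-right $m' \times (k-k')$ block is equivalent to the row space of $A$ containing $m'$ linearly independent vectors supported on $U$. By rank-nullity applied to the restriction map from the row space to $\F^{[k] \setminus U}$, the dimension of the space of row-space vectors supported on $U$ equals $m - r_A([k] \setminus U)$. Hence $A$ is not tame iff there exist $m' \geq 1$ and $U \subseteq [k]$ with $|U| \leq 2m'$ and $r_A([k] \setminus U) \leq m - m'$. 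Setting $V := [k] \setminus U$ and choosing the optimal $m' = m - r_A(V)$ (which forces $r_A(V) < m$, hence $V \subsetneq [k]$), this condition reads: some $V \subsetneq [k]$ satisfies $2r_A(V) \leq 2m - k + |V|$. Negating yields exactly (ii), with the case $r_A(V) = m$ being automatic since the inequality then reduces to $V \neq [k]$.

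For (ii) $\Leftrightarrow$ (iii), fix $i \in [k]$ and restrict the column matroid of $A$ to $[k] \setminus \{i\}$. Two disjoint bases of $A$ contained in $[k] \setminus \{i\}$ correspond precisely to independent sets $I, J \subseteq [k] \setminus \{i\}$ with $|I \cup J| = 2m$, since any such pair forces $|I| = |J| = m$ and $I \cap J = \emptyset$. Applying Proposition~\ref{prop:matroidunion} to this restriction gives
\[ \max\{|I \cup J| : I, J \subseteq [k] \setminus \{i\}\text{ independent in }A\} = \min\{(k-1) - |U| + 2r_A(U) : U \subseteq [k] \setminus \{i\}\}. \]
So (iii) holds iff for every $i \in [k]$ and every $U \subseteq [k] \setminus \{i\}$ we have $2r_A(U) \geq 2m + 1 - k + |U|$. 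Re-quantifying over pairs $(i, U)$ with $i \notin U$ is the same as quantifying over $U \subsetneq [k]$, giving exactly (ii).

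The main obstacle is the first dictionary: correctly correlating the block-form pattern with the rank of a column subset and treating the boundary cases ($m' \in \{0, m\}$, $V = \emptyset$, $V$ containing a basis). In particular, one needs to verify that restricting to $m' \geq 1$ in the definition of tame is both necessary (otherwise every matrix is trivially not tame) and sufficient to capture exactly the witnesses arising from failures of (ii). Once this translation is in place, both equivalences reduce to linear algebra and a direct application of the matroid union theorem.
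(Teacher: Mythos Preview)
Your proposal is correct and follows essentially the same route as the paper. For (i) $\Leftrightarrow$ (ii) both arguments translate the block form into a rank condition on the complementary column set; you package this via rank--nullity on the restriction of the row space, while the paper does the equivalent explicit row reduction, but the witness correspondence $V \leftrightarrow (m',U)$ is identical. For (ii) $\Leftrightarrow$ (iii) the only minor difference is that the paper invokes Proposition~\ref{prop:matroidunion} only for (ii) $\Rightarrow$ (iii) and proves (iii) $\Rightarrow$ (ii) by a direct pigeonhole count ($2r_A(U)\geq |U\cap B|+|U\cap B'|$ for disjoint bases $B,B'$ avoiding some $i\notin U$), whereas you use the min--max equality symmetrically for both directions; your re-quantification over pairs $(i,U)$ with $i\notin U$ to recover all $U\subsetneq[k]$ is clean and correct.
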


\begin{proof}
\item $\textup{(ii)}\Rightarrow \textup{(i)}$.\quad 
Assume (ii) holds, and suppose that $A$ is not tame. Using elementary row-operations and permuting columns, we can bring $A$ into the form $\tilde{A}=\left(\begin{smallmatrix}A'&0\\ B & C \end{smallmatrix}\right)$ where $A'\in \F^{m'\times k'}$ with $k'\leq 2m'$. Since $r_A$ is preserved under elementary row-operations, the matrix $\tilde{A}$ also satisfies (ii). However, taking $U=\{k'+1,\ldots, k\}$ we obtain a contradiction. Indeed, since $C$ has rank at most $m-m'$, we have $r_{\tilde{A}}(U)\leq m-m'$ and therefore 
\[
2r_{\tilde{A}}(U)\leq 2m-2m'\leq 2m-k'=2m-k+|U|.
\]

\item $\textup{(i)}\Rightarrow \textup{(ii)}$.\quad 
Suppose that \textup{(ii)} does not hold and let $U\subsetneq [k]$ be such that $2r_A(U)\leq 2m-k+|U|$. Set $k'=k-|U|$ and $m'=m-r_A(U)$. Then 
\[
k'=k-|U|\leq 2m-2r_A(U)=2m'.
\]
By permuting columns of $A$, we may assume that $U=\{k'+1,\ldots, k\}$. By performing elementary row-operations, we can bring $A$ into the form $\left(\begin{smallmatrix}A'&0\\B&C\end{smallmatrix}\right)$, where $C$ is a $|U|\times r_A(U)$ matrix. Then $A'$ is an $m'\times k$ matrix with $k'\leq 2m'$.

\item $\textup{(ii)}\Rightarrow \textup{(iii)}$.\quad 
Let $i\in [k]$. For every $U\subseteq [k]\setminus\{i\}$ we have $2r_A(U)\geq 2m+1-k+|U|$, or equivalently, \[(k-1)-|U|+2r_A(U)\geq 2m.\] By Proposition~\ref{prop:matroidunion} there exist $I,J\subseteq [k]\setminus\{i\}$ such that $|I\cup J|=2m$ and $r_A(I)=|I|$ and $r_A(J)=|J|$. Hence, $I$ and $J$ are two disjoint bases of $A$.

\item $\textup{(iii)}\Rightarrow \textup{(ii)}$.\quad
Let $U\subsetneq [k]$ and let $i\in[k]\setminus U$. Let $B,B'\subseteq [k]\setminus\{i\}$ be two disjoint bases. Then 
\[|U\cap (B\cup B')|\geq |U|-(k-1-|B\cup B'|)=|U|-k+1+2m.\]
It follows that 
\[2r_A(U)\geq 2\max\{|U\cap B|,|U\cap B'|\}\geq |U\cap B|+|U\cap B'|\geq |U|-k+1+2m.\] 
\end{proof}

\begin{remark}Note that Lemma~\ref{tamechar} gives a good characterization of tame matrices in the following sense: if a matrix is not tame, this can be certified by giving a set $U\subsetneq [k]$ that violates the inequality in (ii), and if a matrix is tame, this can be certified by giving two disjoint bases contained in $[k]\setminus \{i\}$ for every $i\in [k]$ as in (iii). In fact, since finding a maximum size set that is the union of two independent sets can be done in polynomial time by the matroid union algorithm, we can check in polynomial time if a given matrix is tame by checking (iii).   
\end{remark}

\begin{lemma}\label{extendone}
Let $A\in \F^{(m+1)\times(2m+1)}$ have rank $m+1$, and let $A'$ be obtained by deleting the last row of $A$. Suppose that $A'$ is tame. Then, there are bases $B_1,B_2$ of $A$ such that $B_1\cup B_2=[2m+1]$. 
\end{lemma}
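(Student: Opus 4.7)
The plan is to apply the matroid union theorem (Proposition~\ref{prop:matroidunion}) to the matroid on $[2m+1]$ induced by $A$, and show that the minimum on the right-hand side equals $2m+1$. This will produce two independent sets whose union is all of $[2m+1]$, which can then be promoted to bases.

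First, I would observe that since $A$ has rank $m+1$, bases of $A$ have size $m+1$, and the rank function satisfies $r_A(U) \geq r_{A'}(U)$ for every $U \subseteq [2m+1]$, because $A'$ is obtained from $A$ by deleting a row. By the tameness of $A'$ (which is an $m\times(2m+1)$ matrix of rank $m$), Lemma~\ref{tamechar}(ii) applied to $A'$ yields
\[
2r_{A'}(U) \;\geq\; 2m+1-(2m+1)+|U| \;=\; |U| \quad \text{for every } U\subsetneq [2m+1].
\]
Combining these two facts gives $2r_A(U)\geq |U|$ for all $U\subsetneq [2m+1]$, and for $U=[2m+1]$ we have $2r_A(U)=2m+2\geq 2m+1$. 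In all cases,
\[
(2m+1)-|U|+2r_A(U) \;\geq\; 2m+1.
\]

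Next I would apply Proposition~\ref{prop:matroidunion} to the matrix $A$ (with $k=2m+1$). The inequality above shows the minimum on the right-hand side is at least $2m+1$, and taking $U=\emptyset$ shows it equals exactly $2m+1$. Hence there exist subsets $I, J\subseteq [2m+1]$, independent in $A$, with $|I\cup J|=2m+1$, i.e.\ $I\cup J=[2m+1]$.

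Finally, I would upgrade $I$ and $J$ to actual bases. Since $|I|,|J|\leq m+1$ and $|I|+|J|\geq |I\cup J|=2m+1$, at least one of them, say $J$, has size exactly $m+1$, so $J$ is already a basis. If $|I|=m+1$ as well, set $B_1=I$, $B_2=J$. Otherwise $|I|=m$ (since $|I|\geq 2m+1-|J|=m$), and I extend $I$ to a basis $B_1\subseteq [2m+1]$ of $A$; then $B_1\cup J\supseteq I\cup J=[2m+1]$, so $B_1$ and $B_2:=J$ are the desired bases.

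The only place one has to be a bit careful is the promotion step at the end — the matroid union theorem directly yields independent sets of total size $2m+1$ but not necessarily two bases — however this is resolved by the trivial pigeonhole above. The conceptual core of the argument is the rank inequality $2r_A(U)\geq |U|$ on proper subsets, which is where tameness of $A'$ is used in an essential way.
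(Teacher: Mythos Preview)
Your proof is correct and follows essentially the same route as the paper: derive $2r_A(U)\geq |U|$ for all $U\subseteq[2m+1]$ from the tameness of $A'$ (for proper $U$) and the full rank of $A$ (for $U=[2m+1]$), then invoke Proposition~\ref{prop:matroidunion} and extend the resulting independent sets to bases. Your version is in fact a bit cleaner than the paper's (you avoid the parity case split), and your pigeonhole step at the end is fine but unnecessary---one may simply extend \emph{both} $I$ and $J$ to bases, since $B_1\cup B_2\supseteq I\cup J=[2m+1]$ regardless.
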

\begin{proof}
Let $U\subseteq [2m+1]$. If $|U|=2m+1$, then $r_A(U)=m+1$. If $|U|=2\ell+1$ for some $\ell\in \{0,\ldots, m-1\}$, we have $r_A(U)\geq r_{A'}(U)\geq \ell+1$ by Lemma~\ref{tamechar} since $A'$ is tame.  If $|U|=2\ell+2$ and $u\in U$, we have $r_A(U)\geq r_A(U\setminus\{u\})\geq \ell+1$. So we see that
\[
2r_A(U)\geq |U|\quad\text{for all $U\subseteq [2m+1]$}. 
\]
It now follows by Proposition~\ref{prop:matroidunion} that there are $I_1,I_2\subseteq [2m+1]$ with $r_A(I_1)=|I_1|$, $r_A(I_2)=|I_2|$, and $|I_1\cup I_2|=2m+1$. Extending for $i=1,2$ the set $I_i$ to a basis $B_i$ of $A$ completes the proof.
\end{proof}

The following proposition will be of key importance in the proof of Theorem~\ref{thm:mainA}.
\begin{proposition}\label{prop:disjointmaxrank}
Let $A\in \F^{m\times (2m+1)}$ be a balanced, tame matrix of rank $m$. Let $x$ be a solution to $Ax^\transp=0$ with $\arank(x)=r\leq m$. Then there exist disjoint subsets $I_1,I_2\subseteq [2m+1]$ of size $r$, such that $\arank\{x_i:i\in I_1\}=\arank\{x_i:i\in I_2\}=r$.
\end{proposition}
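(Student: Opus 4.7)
The plan is to apply the matroid union theorem (Proposition~\ref{prop:matroidunion}) to the \emph{affine matroid} $M_x$ on ground set $[2m+1]$ whose independent sets are the subsets $I$ with $\{x_i : i \in I\}$ affinely independent. This matroid has rank $r$, and exhibiting the desired $I_1,I_2$ amounts to exhibiting two disjoint bases of $M_x$. By Proposition~\ref{prop:matroidunion} applied to $M_x$, it suffices to verify
\[
(2m+1) - |U| + 2\arank\{x_i : i\in U\} \;\geq\; 2r \qquad \text{for every } U\subseteq [2m+1].
\]

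The next step is to re-express $\arank\{x_i : i\in U\}$ through the balanced annihilator. Write $V = [2m+1]\setminus U$. The elements of $\Annbal\bigl((x_i)_{i\in U}\bigr)$ correspond exactly to those $\mu\in\Annbal(x)$ with $\mu_i = 0$ for $i\in V$, that is, to the kernel of the restriction map $\pi_V\colon \Annbal(x)\to \F^V$. Combining rank-nullity with Lemma~\ref{lem:affine-rank-nullity} (applied both to $x$ and to $(x_i)_{i\in U}$) and using $\dim\Annbal(x) = (2m+1)-r$, I would obtain
\[
\arank\{x_i:i\in U\} \;=\; |U| - (2m+1) + r + \dim \pi_V(\Annbal(x)).
\]
Substituting this into the target inequality and simplifying, the problem reduces to showing
\[
2\dim \pi_V(\Annbal(x)) \;\geq\; |V| \qquad \text{for every } V\subseteq [2m+1].
\]

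For the final step I would use that $\rowspace(A)\subseteq \Annbal(x)$ (because $Ax^\transp=0$), so $\dim\pi_V(\Annbal(x)) \geq \dim\pi_V(\rowspace(A)) = r_A(V)$. When $V\subsetneq [2m+1]$, tameness of $A$ via Lemma~\ref{tamechar}(ii) (with $k=2m+1$) gives exactly $2r_A(V)\geq |V|$. The single case $V=[2m+1]$ is \emph{not} covered by tameness, but there $\pi_V$ is the identity and $\dim\pi_V(\Annbal(x)) = 2m+1-r$, so the required bound $2(2m+1-r)\geq 2m+1$ follows immediately from the hypothesis $r\leq m$.

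The main obstacle I anticipate is recognising that tameness should be applied to $\dim\pi_V(\Annbal(x))$ rather than to $\arank$ directly, and that the two ingredients (tameness of $A$ and the hypothesis $r\leq m$) partition the set of $V$'s into exactly the two complementary cases $V\subsetneq [2m+1]$ and $V=[2m+1]$; once this structure is seen, the rest is bookkeeping between $\arank$, $\Annbal$, and projections via rank-nullity.
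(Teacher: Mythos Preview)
Your argument is correct. Both your proof and the paper's ultimately rest on the matroid union theorem together with the tameness inequality $2r_A(V)\geq |V|$ for $V\subsetneq[2m+1]$, but you apply matroid union to a different matroid than the paper does, and your route is more direct.

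The paper works on the dual side: it extends $A$ to a matrix $A'$ whose rows span $\Annbal(x)$ (so that the column matroid of $A'$ is dual to your affine matroid $M_x$), invokes the auxiliary Lemma~\ref{extendone} to find two bases of an intermediate $(m+1)$-row matrix whose union is $[2m+1]$, extends these to bases $B_1,B_2$ of $A'$, and finally takes complements $I_j=[2m+1]\setminus B_j$, appealing to Lemma~\ref{lem:bases} to translate back to affine rank. You instead apply Proposition~\ref{prop:matroidunion} directly to $M_x$ (which is the column matroid of the matrix $X=\left(\begin{smallmatrix}1&\cdots&1\\x_1&\cdots&x_{2m+1}\end{smallmatrix}\right)$, so the proposition applies verbatim), and verify the min-max condition via the clean identity $\arank\{x_i:i\in U\}=|U|-(2m+1)+r+\dim\pi_V(\Annbal(x))$ and the inclusion $\rowspace(A)\subseteq\Annbal(x)$. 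This bypasses both Lemma~\ref{extendone} and Lemma~\ref{lem:bases} entirely, and it makes transparent why the hypothesis $r\leq m$ is needed exactly for the single case $V=[2m+1]$ not covered by tameness. The paper's detour through the dual matroid and the intermediate $(m+1)$-row matrix buys nothing here; your argument is shorter and conceptually cleaner.
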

\begin{proof}
Let $m'=2m+1-r\geq m+1$. We can extend $A$ with additional rows to obtain a balanced matrix $A'\in \F^{m'\times (2m+1)}$ of rank $m'$ such that $A'x^\transp=0$ (extend the rows of $A$ to a basis of $\Annbal(x)$.) Note that we add $m+1-r\geq 1$ rows in total.

Let $A''$ be the submatrix of $A'$ consisting of the first $m+1$ rows (so $A$ with one row added). By Lemma~\ref{extendone} there exist subsets $J_1,J_2\subseteq [2m+1]$ of size $m+1$ such that $J_1\cup J_2=[2m+1]$ and $r_{A''}(J_1)=r_{A''}(J_2)=m+1$. Clearly, we then also have: $r_{A'}(J_1)=r_{A'}(J_2)=m+1$. It follows that there are bases $B_1$ and $B_2$ of $A'$ such that $B_1\cup B_2=[2m+1]$. Since $|B_1|=|B_2|=2m+1-r$, it follows that $I_1:=[2m+1]\setminus B_1$ and $I_2:=[2m+1]\setminus B_2$ are disjoint sets of size $r$. By Lemma~\ref{lem:bases}, $\arank\{x_i:i\in I_1\}=r=\arank\{x_i:i\in I_2\}$. 
\end{proof}

\subsection*{Extending tame matrices}
In this subsection we show the following proposition.
\begin{proposition}\label{prop:extend}
Let $A\in \F^{m\times k}$ be tame and row-balanced with $k>2m+1$. Then $A$ can be extended to a tame, row-balanced matrix $A'\in \F^{m'\times (2m'+1)}$ such that if $x'=(x_1,\ldots, x_{k'})$ is a generic solution to $A'(x')^\transp=0$, then $x=(x_1,\ldots, x_k)$ is a generic solution to $Ax^\transp=0$. 
\end{proposition}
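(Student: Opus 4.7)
The plan is induction on $d := k - (2m+1) \geq 1$. Given a tame row-balanced $A \in \F^{m\times k}$ with $d \geq 1$, I will construct a one-row/one-column extension
\[
A'' = \begin{pmatrix} A & 0 \\ b^\transp & c \end{pmatrix} \in \F^{(m+1) \times (k+1)},
\]
with $b \in \F^k$, $c \in \F\setminus\{0\}$, and $b\cdot\mathbf{1}_k + c = 0$, so that $A''$ is again tame and row-balanced. Since $d$ drops to $d-1$ after one step, iterating $d$ times yields the desired $A' \in \F^{m'\times(2m'+1)}$ with $m' = m+d$. Preservation of the generic-solution property is immediate from $c \neq 0$: if $(\mu,0) \in \rowspace(A'')$ with $\mu \in \F^k$, reading the last coordinate of any combination expressing $(\mu,0)$ forces the coefficient of the new row to vanish, so $\mu \in \rowspace(A)$; this composes cleanly along the $d$ inductive steps to give the full statement.

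The heart of the matter is choosing $b$ so that $A''$ is tame. I would apply Lemma~\ref{tamechar}(ii): tameness of $A''$ is equivalent to $|U| - 2r_{A''}(U) \leq d-1$ for every $U \subsetneq [k+1]$, and I would split on whether $k+1 \in U$. When $k+1 \in U$, a direct calculation shows $r_{A''}(U) = r_A(U\cap[k]) + 1$ regardless of $b$: either $b|_{U\cap[k]} \notin \rowspace(A_{:,U\cap[k]})$ so the new row already supplies the extra dimension, or $b|_{U\cap[k]} \in \rowspace(A_{:,U\cap[k]})$ so instead $(0,c)$ supplies it because $c \neq 0$. Either way the inequality follows from $A$'s tameness at $U\cap[k] \subsetneq [k]$. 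When $U \subseteq [k]$ the condition reduces to: (i) for $U=[k]$, $b \notin \rowspace(A)$; and (ii) for every ``tight'' $U \subsetneq [k]$ with $|U| - 2r_A(U) = d$, the restriction $b|_U$ lies outside $\rowspace(A_{:,U})$ so that appending $b^\transp$ strictly raises the rank on $U$. Condition (i) is already implied by $c \neq 0$, since $A$ being row-balanced forces $\rowspace(A) \subseteq \{v : v \cdot \mathbf{1}_k = 0\}$ while $b \cdot \mathbf{1}_k = -c \neq 0$.

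The main obstacle is producing $b \in \F^k$ satisfying (ii) together with $b \cdot \mathbf{1}_k \neq 0$. Each constraint excludes $b$ from a proper affine subspace of $\F^k$: the hyperplane $\{v \cdot \mathbf{1}_k = 0\}$ of codimension $1$, and for each tight $U$ a subspace of codimension $|U| - r_A(U) = (|U|+d)/2 \geq 1$. Over an infinite field a generic $b$ works by a trivial dimension count, but over $\F_q$ one must argue that the union of forbidden subspaces does not exhaust $\F_q^k$. I would handle this by invoking Proposition~\ref{prop:matroidunion}: the tight sets are precisely the certificates achieving the minimum in the matroid-union rank formula for $M(A)$, and this structural description bounds both their number and their intersection pattern tightly enough for a counting / inclusion--exclusion argument to produce the desired $b$. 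Once $b$ is found, $A''$ is tame and the induction carries through.
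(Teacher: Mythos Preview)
Your overall plan and your reduction of tameness of $A''$ to conditions (i) and (ii) on $b$ match the paper's approach, and your argument for the generic-solution property is correct. The genuine gap is the last paragraph: over $\F_2$, the vector $b$ you are looking for \emph{need not exist}, so the one-row/one-column step cannot always be carried out.

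Concretely, take $A=\begin{pmatrix}1&1&1&1\end{pmatrix}\in\F_2^{1\times 4}$, which is tame and row-balanced with $d=k-(2m+1)=1$. The tight sets (your sense: $|U|-2r_A(U)=d$) are exactly the four $3$-element subsets $U\subsetneq[4]$, and for each such $U$ the row space of $A_{:,U}$ is $\{(0,0,0),(1,1,1)\}$. Condition (ii) therefore forbids $b$ from having any three coordinates equal, while $b\cdot\mathbf{1}_4\neq 0$ forces $b$ to have weight $1$ or $3$; these are incompatible, so no $b$ exists. Your proposed ``counting / inclusion--exclusion'' argument cannot succeed here: the union of forbidden subspaces really does cover all of $\F_2^4\setminus H$. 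The paper makes this failure explicit and handles $\F=\F_2$ by a separate construction (Lemma~\ref{Extend_Tame_Matrixtwo}) that adds three rows and five columns at once, still decreasing $k-2m$ by exactly one.

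For $|\F|\geq 3$ your outline can be completed, but the paper's argument is both simpler and constructive: rather than counting, it exploits submodularity of $r_A$ to show that the tight subsets of $[k-1]$ form a sublattice with a unique minimal element $I\neq\emptyset$, picks any $i\in I$, and takes $b=\alpha e_i+\beta e_k$ with $\alpha,\beta,\gamma\in\F\setminus\{0\}$ satisfying $\alpha+\beta+\gamma=0$ (possible precisely when $|\F|\geq 3$). The point is that every tight $U\subseteq[k-1]$ then contains $i$, which lets one peel off $i$, $k$, or $k{+}1$ from any offending $U$ and reduce to tameness of $A$; no enumeration of tight sets is needed.
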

Before proving the proposition, we need a number of lemmas.

\begin{lemma}\label{Extend_Tame_Matrix}
Let $A\in \F^{m\times k}$ be tame. Suppose that $k\geq 2m+2$. Then there exist $i\in[k-1]$ such that for all $\alpha,\beta,\gamma\in \F\setminus\{0\}$ the matrix 
\[
A'=\begin{pmatrix}A&0\\\alpha e_i^\transp+\beta e_k^\transp&\gamma\end{pmatrix}\in \F^{(m+1)\times (k+1)}
\]
is tame. 
\end{lemma}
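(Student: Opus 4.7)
The plan is to verify tameness of $A'$ via criterion~(ii) of Lemma~\ref{tamechar}, that is, to show $2r_{A'}(U)\geq 2m+2-k+|U|$ for every $U\subsetneq[k+1]$. I choose $i$ using the matroid structure of $A$: by criterion~(iii) of Lemma~\ref{tamechar} applied to $A$ with deleted index $k$, the set $[k-1]$ contains two disjoint bases $B_1,B_2$ of $A$. Since $|B_1\cup B_2|=2m\leq k-2$ (here $k\geq 2m+2$ enters), I pick $i\in[k-1]\setminus(B_1\cup B_2)$. Then $B_1,B_2\subseteq[k]\setminus\{i,k\}$, so Proposition~\ref{prop:matroidunion} applied to $A$ restricted to $[k]\setminus\{i,k\}$ gives $2r_A(V)-|V|\geq 2m+2-k$ for every $V\subseteq[k]\setminus\{i,k\}$. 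Call $V\subsetneq[k]$ \emph{tight} if $2r_A(V)-|V|=2m+1-k$ (the extremal case of $A$'s tameness); the key point of the choice is that no subset of $[k]\setminus\{i,k\}$ is tight.

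I split the check on whether $k+1\in U$. If $k+1\in U$, the block structure of $A'$ and $\gamma\neq 0$ give (by eliminating the last column against the others) $r_{A'}(U)=r_A(U\setminus\{k+1\})+1$, and the required inequality is just tameness of $A$ on $U\setminus\{k+1\}\subsetneq[k]$. If $k+1\notin U$ and $U=[k]$, I must show $\alpha e_i+\beta e_k\notin\rowspace(A)$ for all $\alpha,\beta\neq 0$; since $B_1\subseteq[k]\setminus\{i,k\}$ is a basis of $A$, the projection $\ker A\to\F^2$, $c\mapsto(c_i,c_k)$, is surjective, so no line $\{(x,y):\alpha x+\beta y=0\}$ with $\alpha,\beta\neq 0$ can contain its image, giving the conclusion.

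For general $U\subsetneq[k]$, write $A_U$ for the submatrix of $A$ on the columns indexed by $U$ and let $v_U:=\alpha e_i|_U+\beta e_k|_U\in\F^U$. Then $r_{A'}(U)\in\{r_A(U),r_A(U)+1\}$, equal to $r_A(U)+1$ precisely when $v_U\notin\rowspace(A_U)$. In the latter case tameness of $A$ already implies the required inequality, so the only problematic $U$ are tight ones with $v_U\in\rowspace(A_U)$. I rule these out by splitting on $\{i,k\}\cap U$. If $i,k\notin U$, then $U\subseteq[k]\setminus\{i,k\}$ has no tight subsets. If $i\in U$, $k\notin U$, then $v_U\in\rowspace(A_U)$ reduces (by $\alpha\neq 0$) to $e_i|_U\in\rowspace(A_U)$, equivalently $r_A(U\setminus\{i\})=r_A(U)-1$; together with tightness of $U$ this yields $2r_A(U\setminus\{i\})-|U\setminus\{i\}|=2m-k$, violating tameness of $A$ on the proper subset $U\setminus\{i\}$. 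The case $k\in U$, $i\notin U$ is symmetric.

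The main obstacle is $i,k\in U$ tight. Here $v_U\in\rowspace(A_U)$ for some $\alpha,\beta\neq 0$ iff the image $W_U\subseteq\F^2$ of the projection $\ker A_U\to\F^2$, $c\mapsto(c_i,c_k)$, lies in a line through the origin that is not a coordinate axis; equivalently, $\dim W_U\leq 1$ and $W_U$ is not an axis. Rank-nullity applied to this projection gives $\dim W_U=2+r_A(U\setminus\{i,k\})-r_A(U)$. Tightness of $U$ combined with the absence of tight subsets of $[k]\setminus\{i,k\}$ rules out $\dim W_U<2$: if $\dim W_U=0$ then $r_A(U\setminus\{i,k\})=r_A(U)-2$, and tightness of $U$ forces $2r_A(U\setminus\{i,k\})-|U\setminus\{i,k\}|=2m-1-k$, violating tameness of $A$; if $\dim W_U=1$ then $r_A(U\setminus\{i,k\})=r_A(U)-1$, and tightness of $U$ transfers to $U\setminus\{i,k\}\subseteq[k]\setminus\{i,k\}$, contradicting the choice of $i$. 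Hence $W_U=\F^2$, so $v_U\notin\rowspace(A_U)$ for every $\alpha,\beta\neq 0$, completing the proof.
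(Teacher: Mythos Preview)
Your proof is correct and arrives at the same key property of $i$ as the paper---namely, that no set $V\subseteq[k]\setminus\{i,k\}$ is \emph{tight} (i.e.\ satisfies $2r_A(V)-|V|=2m+1-k$)---but by a genuinely different route. The paper works with the function $f(U)=2r_A(U)-(2m+1)+k-|U|$ on subsets of $[k-1]$, uses submodularity of $r_A$ to show that the zero set $\mathcal{T}=\{U:f(U)=0\}$ is a lattice, and picks $i$ in its unique minimal element (which is nonempty because $k>2m+1$). You instead pull two disjoint bases $B_1,B_2\subseteq[k-1]$ out of criterion~(iii) of Lemma~\ref{tamechar}, use $k\geq 2m+2$ to pick $i\in[k-1]\setminus(B_1\cup B_2)$, and then read off the tightness-free property directly from Proposition~\ref{prop:matroidunion} applied to $[k]\setminus\{i,k\}$. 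This is more constructive and sidesteps the submodularity argument entirely.

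Your verification of tameness for $A'$ also differs. The paper treats every $U$ meeting $\{i,k,k+1\}$ at once via the single bound $r_{A'}(U)\geq 1+r_A(U\setminus\{j\})$, and handles $U\subseteq[k-1]\setminus\{i\}$ by the non-tightness property. You instead separate $k+1\in U$, $U=[k]$, and $U\subsetneq[k]$, and for the last case analyse precisely when the new row $v_U$ lies in $\rowspace(A_U)$ through the projection $\ker A_U\to\F^2$ onto the $(i,k)$-coordinates. The payoff is that your argument makes explicit why the hard subcase $i,k\in U$ with $U$ tight works: the rank--nullity identity $\dim W_U=2+r_A(U\setminus\{i,k\})-r_A(U)$ together with your choice of $i$ forces $\dim W_U=2$, so $v_U\notin\rowspace(A_U)$ for \emph{every} nonzero pair $(\alpha,\beta)$. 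This is a more careful handling of the one genuinely delicate situation (both special columns present), at the price of a longer case split.
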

\begin{proof}
For $U\subseteq [k-1]$ define $f(U)=2r_A(U)-2m-1+k-|U|$. Since $A$ is tame, it follows by Lemma~\ref{tamechar} that $f(U)\geq 0$ for every $U\subseteq  [k-1]$. Let 
\[
\mathcal{T}=\{U\subseteq [k-1]: f(U)=0\}.
\]
Note that $[k-1]\in \mathcal{T}$ since $r_A([k-1])\leq m$. So $\mathcal{T}$ is nonempty.

\begin{claim} Let $U,V\in \mathcal{T}$. Then $U\cup V, U\cap V\in \mathcal{T}$.
\end{claim}
\begin{proofclaim}
By the submodularity of $r_A$ we have $f(U\cap V)+f(U\cup V)\leq f(U)+f(V)=0$. Since $f(U\cap V)\geq 0$ and $f(U\cup V)\geq 0$, we have $f(U\cap V)=f(U\cup V)=0$.
\end{proofclaim}

Let $I\subseteq [k-1]$ be the unique inclusionwise minimal subset of $\mathcal{T}$. Since $\emptyset\not\in \mathcal{T}$ as $k> 2m+1$, the set $I$ is nonempty. Let $i\in I$. So $i\in U$ for all $U\in \mathcal{T}$. We will show that for this choice of $i$ the matrix $A'$ is tame for all $\alpha, \beta,\gamma\in \F\setminus\{0\}$.

Let $U\subsetneq [k+1]$ be arbitrary. By Lemma~\ref{tamechar}, it suffices to show that 
\[2r_{A'}(U)\geq 2(m+1)+1-(k+1)+|U|.\] 
Suppose that for some $j\in \{i,k,k+1\}$ we have $j\in U$. If $k+1\in U$ we choose $j=k+1$. Note that $U\setminus\{j\}\subsetneq [k]$. Since $\alpha,\beta,\gamma\neq 0$, we have $r_{A'}(U)\geq 1+r_A(U\setminus\{j\})$. Since $A$ is tame, we obtain 
\begin{align*}
    2r_{A'}(U)&\geq 2+2r_A(U\setminus\{j\})\\
    &\geq 2+2m+1-k+|U\setminus\{j\}|\\
    &=2(m+1)+1-(k+1)+|U|
\end{align*}
as desired.  

Therefore, we may assume that $U\cap\{i,k,k+1\}=\emptyset$. So $U\subseteq [k-1]$ and $U\not\in \mathcal{T}$. It follows that $f(U)\geq 1$, and therefore
\begin{align*}
   2r_{A'}(U)&\geq 2r_A(U)\\
   &\geq 1+ (2m+1-k+|U|)\\
   &=2(m+1)+1-(k+1)+|U|. 
\end{align*}
\end{proof}

Note that if $\F\neq \F_2$, we can choose elements $\alpha,\beta,\gamma\in \F\setminus\{0\}$ such that $\alpha+\beta+\gamma=0$. Hence, if $A$ is row-balanced, we can take $A'$ to be row-balanced as well. For $\F=\F_2$ this does not work. In fact the row-balanced tame matrix $\begin{bmatrix}1&1&1&1\end{bmatrix}\in \F_2^{1\times 4}$ cannot be extended to a row-balanced tame matrix $A'\in \F_2^{2\times 5}$. For this reason, we need the following lemma.

\begin{lemma}\label{Extend_Tame_Matrixtwo}
Let $A\in \F_2^{m\times k}$ be tame. Suppose that $k\geq 2m+2$. Then there exists $i\in[k-1]$ such that the matrix $A''\in \F_2^{(m+3)\times (k+5)}$ given by
\[
A'':=\begin{pmatrix}A&0&0&0&0&0\\e_i^\transp+e_k^\transp &1&0&1&0&0\\e_i^\transp  &1&1&0&1&0\\e_k^\transp &1&1&0&0&1\end{pmatrix}.
\]
is tame. 
\end{lemma}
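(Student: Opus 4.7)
The plan is to mirror the proof of Lemma~\ref{Extend_Tame_Matrix}. Define $f(U)=2r_A(U)-(2m+1)+k-|U|$ and $\mathcal{T}=\{U\subseteq [k-1]:f(U)=0\}$. Tameness of $A$ gives $f(U)\geq 0$ for every $U$, and submodularity of $r_A$ shows $\mathcal{T}$ is closed under union and intersection, hence has a unique inclusion-minimal element $I$. Since $k\geq 2m+2$ we have $f(\emptyset)=k-2m-1\geq 1$, so $I\neq\emptyset$; choose any $i\in I$. The key property driving the argument is that $r_A(W)=r_A(W\setminus\{i\})$ for every $W\supseteq I$, which is immediate from $f(I)=0$ and the inequality $f(I\setminus\{i\})\geq 1$ forced by minimality of $I$.

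To prove $A''$ tame, by Lemma~\ref{tamechar}(ii) it suffices to verify $2r_{A''}(U)\geq 2m+2-k+|U|$ for every $U\subsetneq [k+5]$. Write $W=U\cap [k]$ and $V=U\cap \{k+1,\ldots,k+5\}$. Projecting onto the top $m$ rows yields the decomposition
\[
r_{A''}(U)=r_A(W)+\dim\bigl(\mathrm{Im}\,\phi_W+\mathrm{Im}\,\psi_V\bigr),
\]
where $\phi_W:\ker(A|_W)\to\F_2^3$ is $\lambda\mapsto \mathbf{1}_{i\in W}\lambda_i(1,1,0)+\mathbf{1}_{k\in W}\lambda_k(1,0,1)$ and $\psi_V:\F_2^V\to\F_2^3$ reads off the bottom-three-row entries of the new columns. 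The bottom parts of the five new columns are $(1,1,1),(0,1,1),e_1,e_2,e_3$; a short enumeration shows that any pair among them is independent, and that any triple spans $\F_2^3$ except for $\{k+1,k+2,k+3\}$ and $\{k+2,k+4,k+5\}$ (whose three bottoms sum to zero).

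I would then split by $\dim\mathrm{Im}\,\psi_V$. When it equals $3$, the identity gives $r_{A''}(U)=3+r_A(W)$, and tameness of $A$ on $W\subsetneq [k]$ (or $r_A([k])=m$) yields the required inequality with room to spare, since $|V|\leq 5$. For $|V|\in\{1,2\}$, the bound $r_{A''}(U)\geq r_A(W)+\dim\mathrm{Im}\,\psi_V$ combined with tameness of $A$ handles $W\subsetneq [k]$; the boundary $W=[k]$ is dealt with using $r_{A''}([k])=m+2$, which follows from Lemma~\ref{tamechar}(iii) applied to $A$: the latter gives $r_A([k]\setminus\{i,k\})=m$, so the projection $\ker A\to\F_2^{\{i,k\}}$ is surjective and both $(1,1,0)$ and $(1,0,1)$ lie in $\mathrm{Im}\,\phi_{[k]}$. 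The two exceptional $|V|=3$ subsets with $\dim\mathrm{Im}\,\psi_V=2$ require a one-line $\F_2^3$ verification that $\mathrm{Im}\,\psi_V$ always contains a vector outside $\langle(1,1,0),(1,0,1)\rangle$, which forces $\dim(\mathrm{Im}\,\phi_W+\mathrm{Im}\,\psi_V)=3$ when $W=[k]$.

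The most delicate case is $V=\emptyset$ with $W\in\mathcal{T}$, where tameness of $A$ alone falls exactly one unit short. Here the choice $i\in I$ pays off: since $W\supseteq I$, the key property above furnishes $\lambda\in\ker(A|_W)$ with $\lambda_i=1$ (present only if $i\in W$, which it is), so $(1,1,0)\in\mathrm{Im}\,\phi_W$ and $r_{A''}(U)\geq r_A(W)+1$, exactly closing the gap. The main obstacle throughout is bookkeeping the sum $\mathrm{Im}\,\phi_W+\mathrm{Im}\,\psi_V$ in the handful of borderline configurations, but with $i$ chosen as above, each reduces to a short linear-algebra check in $\F_2^3$.
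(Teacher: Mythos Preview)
Your approach is sound and, once the casework is completed, yields a correct proof; it is, however, a genuinely different route from the paper's. The paper does not redo the submodularity argument: it simply invokes Lemma~\ref{Extend_Tame_Matrix} to conclude that the intermediate matrix $A'=\left(\begin{smallmatrix}A&0\\e_i^\transp+e_k^\transp&1\end{smallmatrix}\right)$ is tame, sets $i=k-1$ without loss of generality, and then observes that the bottom three rows of the last \emph{seven} columns $k-1,\ldots,k+5$ of $A''$ are exactly the seven nonzero vectors of $\F_2^3$. Splitting $U$ into $U_1=U\cap[k-2]$ and $U_2=U\cap\{k-1,\ldots,k+5\}$ and using that any $t\leq 6$ distinct nonzero vectors of $\F_2^3$ span at least $t/2$ dimensions gives $r_{A''}(U)\geq r_{A'}(U_1)+|U_2|/2$; tameness of $A'$ on $U_1$ then finishes every case except $|U_2|=7$, which is a two-line endgame. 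This replaces your $\phi_W,\psi_V$ analysis by a single Fano-type observation and absorbs the ``one unit short'' borderline into the tameness of $A'$, whereas your route is more structural but trades that for more bookkeeping. One boundary case you should add: $V=\emptyset$ with $f(W)=0$ and $k\in W$ is not covered by ``$W\in\mathcal{T}$'' (your $\mathcal{T}$ lives in $2^{[k-1]}$), but $f(W\setminus\{k\})\geq 0$ then forces $r_A(W\setminus\{k\})=r_A(W)$, so some $\lambda\in\ker(A|_W)$ has $\lambda_k=1$ and $\phi_W(\lambda)\neq 0$, closing the gap just as in your $W\in\mathcal{T}$ case.
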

Note that if $A$ has rank $m$ and is row-balanced, then $A''$ has rank $m+3$ and is row balanced.
\begin{proof}
Let $i$ be as in the conclusion of Lemma~\ref{Extend_Tame_Matrix}. So the submatrix 
\[A'=\begin{pmatrix}A&0\\e_i^\transp +e_k^\transp &1\end{pmatrix}\]
of $A''$ induced by the first $k+1$ columns and $m+1$ rows is tame. We will use the shorthand notations $r'=r_{A'}$ and $r''=r_{A''}$. By Lemma~\ref{tamechar} we know that 
\[2r'(U)\geq 2(m+1)+1-(k+1)+|U|\quad\text{for all $U\subsetneq [k+1]$}.\]
Let $U\subsetneq [k+5]$ be arbitrary. To show that $A''$ is tame, it suffices to show that 
\[2r''(U)\geq 2(m+1)+1-(k+1)+|U|\] since $2(m+3)+1-(k+5)=2(m+1)+1-(k+1)$. 

By permuting the first $k-1$ columns, we may assume without loss of generality that $i=k-1$. Let $U_1=U\cap [k-2]$ and $U_2=U\setminus U_1=U\cap \{k-1,\ldots, k+5\}$. First consider the case that $|U_2|\leq 6$. Note that the submatrix of $A''$ induced by the last three rows and the last seven columns represent all $7$ vectors in $\F_2^3\setminus\{0\}$. Since any set of $t\leq 6$ vectors in $\F_2^3\setminus\{0\}$ spans a subspace of dimension at least $t/2$, we see that $r''(U)\geq r'(U_1)+|U_2|/2$. Hence,
\begin{align*}
2r''(U) &\geq 2r'(U_1)+|U_2|\\
&\geq 2(m+1)+1-(k+1)+|U_1|+|U_2|\\
&= 2(m+1)+1-(k+1)+|U|.
\end{align*}

Now consider the remaining case that $|U_2|=7$. Since $k+4$ and $k+5$ belong to $U$,  we have $r''(U)\geq 2+r'(U\cap [k+1])$. Since $U$ does not contain $[k+1]$ (as $U\not=[k+5])$, we have
\begin{align*}
2r''(U)&\geq 2r'(U\cap [k+1])+4\\
&\geq 2(m+1)+1-(k+1)+|U\cap [k+1]|+4\\
&=2(m+1)+1-(k+1)+|U|.
\end{align*}
\end{proof}

We are now ready to prove Proposition~\ref{prop:extend}.
\begin{proof}
It suffices to show that $A$ can be extended to a row-balanced, tame $k'\times m'$ matrix $A'$ such that the following two properties hold:
\begin{itemize}
    \item $k'-2m'=(k-2m)-1$,
    \item if $x'=(x_1,\ldots, x_{k'})$ satisfies $A'(x')^\transp=0$ and $\arank\{x'\}=k'-m'$, then $x=(x_1,\ldots, x_k)$ satisfies $Ax^\transp=0$ and $\arank(x)=k-m$.
\end{itemize}
Indeed, repeating this step $k-2m-1$ times results in a $(k-m-1)\times (2(k-m-1)+1)$ matrix with the required properties. Note that we used the characterization of generic solutions from Lemma~\ref{lem:generic}.

Let us first consider the case that $\F\neq \F_2$. Let $\alpha,\beta,\gamma\in \F\setminus\{0\}$ be such that $\alpha+\beta+\gamma=0$. Let $m'=m+1$ and $k'=k+1$. By Lemma~\ref{Extend_Tame_Matrix}, there exists $i\in [k-1]$ such that the matrix $A'\in \F^{m'\times k'}$ given by
\[A'=\begin{pmatrix}A&0\\\alpha e_i+\beta e_k&\gamma\end{pmatrix}\]
is tame. Since $\alpha+\beta+\gamma=0$, the matrix $A'$ is balanced. Now suppose $x'=(x_1,\ldots, x_{k+1})$ satisfies $A'(x')^\transp=0$ and $\arank(x')=(k+1)-(m+1)$. Let $x=(x_1,\ldots, x_k)$. Clearly, $Ax^\transp=0$. Since $\gamma\neq 0$, the vector $x_{k+1}$ is an affine combination of $x_1,\ldots, x_k$, so $\arank(x_1,\ldots, x_k)=\arank(x_1,\ldots, x_{k+1})=k-m$ as required.

Now let us consider the case that $\F=\F_2$. By Lemma~\ref{Extend_Tame_Matrixtwo}, we obtain the tame, row-balanced matrix $A'\in \F_2^{m'\times k'}$ with $m'=m+3$ and $k'=k+5$ given by
\[
A':=\begin{pmatrix}A&0&0&0&0&0\\e_i+e_k&1&0&1&0&0\\e_i&1&1&0&1&0\\e_k&1&1&0&0&1\end{pmatrix},
\]
where $i\in [k-1]$. Let $x'=(x_1,\ldots, x_{k'})$ satisfy $A'(x')^\transp=0$ and $\arank(x')=k'-m'$. Let $x=(x_1,\ldots, x_k)$. Clearly, we have $Ax^\transp=0$. Since $x_{k+3}$, $x_{k+4}$ and $x_{k+5}$ are affinely dependent on $x_1,\ldots, x_{k+2}$, we have $\arank(x')\leq \arank(x_1,\ldots, x_{k+2})\leq \arank(x)+2$. We obtain
\[
\arank(x)\geq \arank(x')-2=(k+5)-(m+3)-2=k-m.
\]
Note that by Lemma~\ref{lem:generic} we also have $\arank(x)\leq k-m$, so we have $\arank(x)=k-m$ as required.
\end{proof}

\section{supersaturation}\label{sec:super}
In this section we prove a supersaturation result (Proposition~\ref{supersatnew}) that is essential to the proof of our main theorem. The proof is based on a standard subspace sampling argument (see for instance Exercise~10.1.9 of \cite{tao2006additive} or \cite{fraser2021three}). We begin by stating some preliminaries. 

For $d,n\in \NN$ the \emph{Gaussian binomial coefficient} $\qbin{n}{d}$ is given by
\[
\qbin{n}{d}=\begin{cases}\prod_{i=0}^{d-1}\frac{1-q^{n-i}}{1-q^{d-i}}&\text{if }d\leq n\\0&\text{if }d>n\end{cases}.
\]
The following fact is well-known.
\begin{lemma}\label{lem:countsubspaces}
Let $k\leq d\leq n$ be nonnegative integers. Then the number of $d$-dimensional linear subspaces of $\F_q^n$ containing a given $k$-dimensional linear subspace is $\qbin{n-k}{d-k}$. 
\end{lemma}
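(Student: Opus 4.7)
The plan is to reduce the count to the standard formula for the total number of subspaces of a given dimension of $\F_q^{n'}$, via quotienting by the given subspace.

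First, I would fix a $k$-dimensional subspace $W\subseteq \F_q^n$ and let $\pi\colon \F_q^n\to \F_q^n/W$ be the quotient map. The assignment $V\mapsto \pi(V)$ is a bijection between the set of $d$-dimensional subspaces $V$ with $W\subseteq V$ and the set of $(d-k)$-dimensional subspaces of $\F_q^n/W$; its inverse sends $U$ to $\pi^{-1}(U)$, which has dimension $\dim U + k$ since $\ker\pi = W$. Since $\F_q^n/W\cong \F_q^{n-k}$, it suffices to count $(d-k)$-dimensional subspaces of $\F_q^{n-k}$.

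Second, I would establish by a standard double count that the number of $d'$-dimensional subspaces of $\F_q^{n'}$ equals $\qbin{n'}{d'}$. The number of linearly independent ordered $d'$-tuples in $\F_q^{n'}$ is $\prod_{i=0}^{d'-1}(q^{n'}-q^i)$, while each $d'$-dimensional subspace contributes $\prod_{i=0}^{d'-1}(q^{d'}-q^i)$ such tuples (as ordered bases of itself). Dividing and pulling out the common factor of $q^i$ from the $i$th term of numerator and denominator gives $\prod_{i=0}^{d'-1}(q^{n'-i}-1)/(q^{d'-i}-1)$, which matches the definition of $\qbin{n'}{d'}$ in the paper (after multiplying numerator and denominator of each factor by $-1$). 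Specializing to $n'=n-k$ and $d'=d-k$ yields the claim.

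I do not expect any genuine obstacle here; the only thing worth being careful about is confirming that $\pi^{-1}$ really sends a $(d-k)$-dimensional subspace of $\F_q^n/W$ back to a $d$-dimensional subspace containing $W$, which is immediate from the dimension identity $\dim \pi^{-1}(U)=\dim U+\dim \ker\pi=\dim U+k$ together with $W=\pi^{-1}(0)\subseteq \pi^{-1}(U)$.
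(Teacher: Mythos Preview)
Your proof is correct. The paper takes a slightly more direct route: it fixes a basis $b_1,\ldots,b_k$ of the given subspace $V$, counts the $(q^n-q^k)(q^n-q^{k+1})\cdots(q^n-q^{d-1})$ ways to extend it to an ordered linearly independent $d$-tuple in $\F_q^n$, and divides by the $(q^d-q^k)\cdots(q^d-q^{d-1})$ such extensions lying inside any fixed $d$-dimensional $W\supseteq V$. Your quotient map instead reduces to the case $k=0$ and then performs exactly that same double count in $\F_q^{n-k}$; this gives a cleaner conceptual separation (a bijection followed by the base-case formula) at the cost of one extra correspondence to verify, whereas the paper's version is a one-step direct computation. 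Both arguments are standard and equally valid.
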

\begin{proof}
Let $V\subseteq \F_q^n$ be a $k$-dimensional linear subspace and let $b_1,\ldots, b_k$ be a basis of $V$. The number of $d-k$-tuples $(b_{k+1},\ldots, b_d)$ such that $b_1,\ldots, b_d$ are linearly independent vectors in $\F_q^n$, is equal to 
$(q^n-q^k)(q^n-q^{k+1})\cdots(q^n-q^{d-1})$. Indeed, for $i=k+1,\ldots, d$ we can choose $b_i$ arbitrarily in $\F_q^n\setminus \mathrm{span}(b_1,\ldots, b_{i-1})$. Thus, we obtain a $d$-dimensional subspace $W:=\mathrm{span} (b_1,\ldots, b_d)$ containing~$V$. 

Each subspace $W\supseteq V$ of dimension $d$ is thus obtained $(q^d-q^k)\cdots(q^d-q^{d-1})$ times, since there are that many ways to extend $b_1,\ldots, b_k$ to a basis $b_1,\ldots, b_d$ of $W$. We conclude that the number of $d$-dimensional subspaces containing $V$ is equal to  
\[
\frac{(q^n-q^k)\cdots(q^n-q^{d-1})}{(q^d-q^k)\cdots(q^d-q^{d-1})}=\qbin{n-k}{d-k}.
\]
\end{proof}
Note that Lemma~\ref{lem:countsubspaces} implies that the number of $d$-dimensional affine subspaces of $\F_q^n$ that contain a given subset $S\subseteq \F_q^n$ of affine rank $r$, is equal to $\qbin{n-r+1}{d-r+1}$. Indeed, by shifting, we may assume that $0\in S$ and then $\dim\spn(S)=r-1$.

We will need the following bounds on the Gaussian binomial coefficients.
\begin{lemma}\label{lem:qbinbound}
Let $d\leq n$ be nonnegative integers. Then 
\[
q^{d(n-d)}\leq \qbin{n}{d}\leq 4q^{d(n-d)}.
\]
\end{lemma}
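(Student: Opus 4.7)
The plan is to work from the product representation
\[
\qbin{n}{d} \;=\; \prod_{i=0}^{d-1}\frac{q^{n-i}-1}{q^{d-i}-1}
\]
and bound each factor individually, gathering the correction terms into a single infinite product that I then need to bound by the constant $4$.

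For the lower bound, I would show that each factor satisfies
\[
\frac{q^{n-i}-1}{q^{d-i}-1} \;\geq\; q^{n-d}.
\]
Clearing denominators, this is equivalent to $q^{n-i}-1 \geq q^{n-i}-q^{n-d}$, i.e., $q^{n-d}\geq 1$, which is immediate. Taking the product over $i=0,\dots,d-1$ gives $\qbin{n}{d}\geq q^{d(n-d)}$.

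For the upper bound I would factor out $q^{n-d}$ from the $i$-th term:
\[
\frac{q^{n-i}-1}{q^{d-i}-1} \;=\; q^{n-d}\cdot\frac{1-q^{-(n-i)}}{1-q^{-(d-i)}} \;\leq\; \frac{q^{n-d}}{1-q^{-(d-i)}}.
\]
Multiplying over $i=0,\dots,d-1$ and reindexing by $j=d-i$ gives
\[
\qbin{n}{d} \;\leq\; q^{d(n-d)}\prod_{j=1}^{d}\frac{1}{1-q^{-j}} \;\leq\; q^{d(n-d)}\prod_{j=1}^{\infty}\frac{1}{1-q^{-j}}.
\]
So the proof reduces to showing that $Q_q := \prod_{j=1}^{\infty}(1-q^{-j})^{-1}\leq 4$ for every prime power $q\geq 2$.

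This last step is the real obstacle, since for $q=2$ the product equals roughly $3.463$, and the naive bound $-\log(1-x)\leq x/(1-x)$ gives only $Q_2\leq e^{\sum 1/(2^j-1)}$, which is too weak. I would instead use the sharper estimate $-\log(1-x)\leq x + x^2$ valid on $[0,\tfrac12]$ (verified by checking the endpoint $x=\tfrac12$, where $\log 2\leq \tfrac34$, and noting both sides have matching Taylor expansion to second order). Applying this with $x=q^{-j}$ yields
\[
\log Q_q \;\leq\; \sum_{j=1}^{\infty}(q^{-j}+q^{-2j}) \;=\; \frac{1}{q-1}+\frac{1}{q^2-1}.
\]
The right-hand side is monotonically decreasing in $q$, so its maximum over $q\geq 2$ is attained at $q=2$, giving $\log Q_q \leq 1+\tfrac13=\tfrac43$, and hence $Q_q \leq e^{4/3} < 4$. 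Combining with the previous display finishes the upper bound $\qbin{n}{d}\leq 4q^{d(n-d)}$.
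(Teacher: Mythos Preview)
Your argument is correct and follows essentially the same route as the paper: both bound each factor of the product by $q^{n-d}/(1-q^{-(d-i)})$ (the paper writes this equivalently as $q^{n-d}(1+\tfrac{1}{q^{d-i}-1})$) and then show the correction product is at most $4$; the paper does this by pulling out the $j=1$ factor of $2$ and applying $1+x\le e^x$ to the tail to get $2e^{2/3}<4$, whereas you use the inequality $-\log(1-x)\le x+x^2$ on all terms to get $e^{4/3}<4$.

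One small caveat: your stated justification of $-\log(1-x)\le x+x^2$ on $[0,\tfrac12]$ is not valid as written, since the Taylor expansions do \emph{not} match to second order (the quadratic term of $-\log(1-x)$ is $x^2/2$), and matching endpoints plus matching Taylor terms would not by itself force the inequality on the whole interval anyway. The inequality is nevertheless true; a clean proof is to set $g(x)=x+x^2+\log(1-x)$, note $g(0)=g'(0)=0$ and $g'(1/2)=0$, and observe that $g''(x)=2-(1-x)^{-2}$ changes sign exactly once on $(0,\tfrac12)$, so $g'\ge 0$ there and hence $g\ge 0$.
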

\begin{proof}
The lower bound follows directly from the fact that for all $i\in\{0,\ldots, d-1\}$ we have 
\[\frac{q^{n-i}-1}{q^{d-i}-1}\geq \frac{q^{n-i}}{q^{d-i}}=q^{n-d}.\]

The upper bound follows from the fact that for all $i\in \{0,\ldots, d-1\}$ we have
\[\frac{q^{n-i}-1}{q^{d-i}-1}\leq q^{n-d}(1+\frac{1}{q^{d-i}-1})\leq q^{n-d}(1+\frac{1}{2^{d-i}-1})
\]
and the fact that
\[
\prod_{i=0}^{d-1} (1+\frac{1}{2^{d-i}-1})
\ = \  2\prod_{i=2}^{d} (1+\frac{1}{2^{i}-1})
\ \leq\ 2 e^{\sum_{i=2}^d\frac{1}{2^i-1}}
\ \leq\ 2 e^{\frac{1}{3}\sum_{i=0}^{d-2}\frac{1}{2^i}}
\ \leq\  2 e^{\frac{2}{3}}\leq 4.
\]
\end{proof}

\begin{proposition}\label{supersatnew}
Let $A\in \F_q^{m\times k}$ be a row-balanced matrix of rank $m$. Let $r$ be a positive integer and let $\delta\in (0,1]$. Suppose that for all $n\geq n_0$ and all $S\subseteq \F_q^n$ of cardinality $|S|\geq q^{(1-\delta)n}$ the set $S$ contains a solution to $Ax^\transp=0$ of affine rank $\geq r$. 

Let $\delta'\in (0,\delta)$. Define
\[
n_1=\lceil\max(\tfrac{\delta}{\delta'}n_0, \tfrac{2+\delta}{\delta-\delta'})\rceil,\qquad \epsilon=\delta'\cdot \frac{r-1+2\delta}{\delta},\qquad C=\frac{q^2-2}{4q^2}q^{-\frac{(r-1+\delta)(2+\delta)}{\delta}}.
\]
Then for every $n\geq n_1$ and $S\subseteq \F_q^n$ of size $|S|\geq q^{(1-\delta')n}$, the set $S$ contains at least $Cq^{rn-\epsilon n}$ solutions to $Ax^\transp=0$ of affine rank at least $r$.
\end{proposition}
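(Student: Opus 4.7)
The plan is to use the subspace sampling argument suggested in the paper. I will choose an integer $d$ slightly above $\delta'n/\delta$, and for uniformly random $d$-dimensional affine subspaces $V\subseteq \F_q^n$ show that a positive proportion satisfy $|S\cap V|\geq q^{(1-\delta)d}$. For each such $V$, the hypothesis (applicable because $n\geq n_1$ will force $d\geq n_0$) yields a solution to $Ax^\transp=0$ of affine rank $\geq r$ inside $S\cap V$. Double-counting (solution, subspace) pairs will then lower-bound the total number of such solutions in $S$.

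Concretely, the steps are as follows. (i) Pick $d=\lceil \delta'n/\delta\rceil + c_0$ for a small constant $c_0$ chosen so that both $d\geq n_0$ and $\delta d-\delta'n\geq \log_q 2$ hold for all $n\geq n_1$. The second inequality is equivalent to $|S|\,q^{d-n}\geq 2q^{(1-\delta)d}$, i.e.\ the average of $|S\cap V|$ over $d$-dim affine subspaces exceeds twice the threshold. (ii) Let $N_0=\qbin{n}{d}q^{n-d}$ be the total number of such subspaces; the identity $\sum_V |S\cap V|=|S|\qbin{n}{d}$ together with the bound $|S\cap V|\leq q^d$ gives, by a reverse-Markov averaging, $\#\{V:|S\cap V|\geq q^{(1-\delta)d}\}\ \geq\ \tfrac12\,\qbin{n}{d}\,q^{n-d-\delta d}$. (iii) Apply the hypothesis inside each such $V$ to obtain a solution of affine rank $\geq r$ contained in $S\cap V$. (iv) By the remark following Lemma~\ref{lem:countsubspaces}, a solution of affine rank $r'\geq r$ lies in exactly $\qbin{n-r'+1}{d-r'+1}$ affine $d$-subspaces; since $(d-r'+1)(n-d)$ decreases in $r'$, Lemma~\ref{lem:qbinbound} bounds this multiplicity by $4q^{(d-r+1)(n-d)}$, with the maximum attained at $r'=r$. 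Dividing (ii)-(iii) by this multiplicity produces $\Omega(q^{r(n-d)-\delta d})$ distinct solutions of affine rank $\geq r$, and substituting $d\approx \delta'n/\delta$ rewrites the exponent as $rn-\epsilon n$ for some $\epsilon=O(\delta')$.

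The main obstacle I expect is matching the precise constants $n_1$, $\epsilon$, and $C$. Each step contributes $O(1)$-multiplicative losses (the two sides of Lemma~\ref{lem:qbinbound}, the rounding of $d$, the factor of $2$ in the reverse-Markov step), and these must be combined carefully---in particular, the constant $c_0$ in the definition of $d$ must be chosen so that the condition $n\geq (2+\delta)/(\delta-\delta')$ appearing in $n_1$ already validates all required inequalities (accounting for the $\log_q 2$ slack and the rounding). Once this bookkeeping is under control, the factor $q^{-\delta d}$ from the reverse-Markov step and the factor $q^{-(d-r+1)(n-d)}$ from dividing out the multiplicity combine cleanly to yield the stated $\Omega(q^{rn-\epsilon n})$ lower bound.
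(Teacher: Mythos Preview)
Your subspace-sampling framework matches the paper's, and steps (i)--(iv) are correct as written. However, there is a genuine quantitative gap, not just bookkeeping: extracting only \emph{one} solution from each good subspace $V$ yields, after step~(iv), a lower bound of order $q^{r(n-d)-\delta d}=q^{rn-(r+\delta)d}$, exactly as you computed. Plugging in $d\approx\delta'n/\delta$ gives $\epsilon=\delta'(r+\delta)/\delta$, which for every $\delta<1$ is strictly larger than the stated value $\epsilon=\delta'(r-1+2\delta)/\delta$; the shortfall in the exponent is $\delta'(1-\delta)/\delta$. No choice of $d$ or tightening of the Gaussian-binomial estimates closes this gap, so as written your argument proves a weaker bound than the proposition claims.

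The missing idea is that each rich subspace contains not one but roughly $q^{(1-\delta)d}$ solutions of affine rank $\geq r$. The paper calls $V$ \emph{rich} when $|S\cap V|>2q^{(1-\delta)d}$ (note the factor of $2$). Then for any $T\subseteq S\cap V$ with $|T|\leq\lceil q^{(1-\delta)d}\rceil-1$ one still has $|(S\cap V)\setminus T|\geq q^{(1-\delta)d}$, so the hypothesis produces a solution entirely inside $(S\cap V)\setminus T$; taking $T$ to consist of one coordinate from each previously found solution shows that $S\cap V$ contains at least $\lceil q^{(1-\delta)d}\rceil$ distinct solutions. Inserting this extra factor $q^{(1-\delta)d}$ into your double count before dividing by the multiplicity shifts the exponent from $rn-(r+\delta)d$ to $rn-(r-1+2\delta)d$, and then $d\approx\delta'n/\delta$ recovers exactly the stated~$\epsilon$. (A minor further difference: the paper bounds the proportion of rich subspaces by averaging over the cosets of a fixed $d$-dimensional linear subspace rather than over all affine $d$-subspaces, which streamlines the constants but is otherwise equivalent to your reverse-Markov step.)
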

\begin{proof}
Let $N=\lceil\frac{n\delta'+2}{\delta}\rceil$. Note that $N\leq n$ by definition of $n_1$. Let $S\subseteq \F_q^n$ have cardinality $|S|\geq q^{(1-\delta')n}$. Let us call an $N$-dimensional affine subspace $V$ of $\F_q^n$ \emph{rich} if $|V\cap S|>2q^{(1-\delta)N}$. If $V$ is rich, then $S\cap V$ contains a solution of affine rank at least $r$ since $N\geq n_0$. In fact, since $(S\setminus T)\cap V\geq q^{(1-\delta)N}$ for every set $T$ of size at most $\lceil q^{(1-\delta)N}-1\rceil$, the set $S\cap V$ must contain at least $\lceil q^{(1-\delta)N}\rceil$ solutions of rank at least $r$.

Consider the partition of $\F_q^n$ into the $q^{n-N}$ cosets of a given $N$-dimensional linear subspace. Let $P$ be the proportion of cosets that are rich. Then 
\[
Pq^{n-N}q^N +(1-P)q^{n-N}2q^{(1-\delta)N}\geq |S|\geq q^{(1-\delta')n}.
\]
Rearranging and dividing by $q^n$ gives
\[
P\geq\frac{q^{-\delta'n}-2q^{-\delta N}}{1-2q^{-\delta N}}.
\]
Since $\delta N\geq \delta'n+2$, it follows that 
\[
P\geq \tfrac{q^2-2}{q^2}q^{-\delta'n}.
\]
Let $M$ be the number of pairs $(V,x)$ where $V\subseteq \F_q^n$ is rich and $x=(x_1,\ldots, x_k)$ is a solution of affine rank at least $r$ with $x_1,\ldots, x_k\in V\cap S$. Then 
\[
M\geq P\cdot q^{n-N}\qbin{n}{N}\cdot q^{(1-\delta)N},
\]
since there are $P\cdot q^{n-N}\qbin{n}{N}$ rich subspaces $V$ and for each such $V$ there are at least $q^{(1-\delta)N}$ solutions in $V\cap S$ of rank at least $r$. Conversely, every solution of affine rank at least $r$ is contained in at most $\qbin{n-r+1}{N-r+1}$ rich affine subspaces $V$. We conclude that the number of solutions of affine rank at least $r$ in $S$ is at least
\begin{eqnarray*}
\frac{P\cdot q^{n-N}\cdot q^{(1-\delta)N}\qbin{n}{N}}{\qbin{n-r+1}{N-r+1}}&\geq&\tfrac{1}{4}P\cdot q^{rn-N(r-1+\delta)}\\
&\geq&\frac{q^2-2}{4q^2}q^{n(r-\delta')-N(r-1+\delta)}\\
&\geq&\frac{q^2-2}{4q^2}q^{-(r-1+\delta)\frac{1+\delta}{\delta}}\cdot q^{n(r-\delta')-n\frac{\delta'}{\delta}(r-1+\delta)}\\
&=&Cq^{nr-\epsilon n},\\
\end{eqnarray*}
where we used Lemma~\ref{lem:qbinbound}, the lower bound on $P$, and the fact that $N\leq \frac{\delta'n}{\delta}+\tfrac{2+\delta}{\delta}$ in the three inequalities, respectively.
\end{proof}
\begin{remark}
The obtained value of $\epsilon$ is in general not optimal. In the `cap set case' (i.e. the system is $x_1-2x_2+x_3=0$ and $r=2$), we can take $\delta=1-\log_3(2.7552)$ and obtain $\epsilon=(2+1/\delta)\delta'\approx 14.9\delta'$. However, using the  bound on the arithmetic triangle removal proved in~\cite{fox2017tight}, one can show that we can take $\epsilon=(1+1/\delta)\delta'\approx 13.9\delta'$ as was shown in~\cite{pohoata1905four}.  
\end{remark}

\section{Proof of Theorem A}\label{sec:thmA}
In this section, we will prove Theorem~\ref{thm:mainA}. By Proposition~\ref{prop:extend}, we may restrict ourselves to the case $k=2m+1$:
\begin{theorem}\label{thm:mainbasic}
Let $A\in \F_q^{m\times (2m+1)}$ be a tame, row-balanced matrix. Then the system $Ax^\transp=0$ is temperate. 
\end{theorem}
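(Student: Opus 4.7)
The approach is to argue by contradiction and leverage a bootstrap on the affine rank of forced solutions. Suppose $Ax^\transp=0$ is not temperate. Since $k=2m+1$, Lemma~\ref{lem:generic} identifies generic solutions with rank-$(m+1)$ solutions, so non-temperance allows us to pick the largest $r\in\{2,\ldots,m\}$ such that some $\delta>0$ forces every sufficiently large $S\subseteq\F_q^n$ with $|S|\ge q^{(1-\delta)n}$ to contain a solution of affine rank $\ge r$; here $r\ge 2$ comes from Theorem~\ref{thm:slicerank} and $r\le m$ from non-temperance. For $\delta'\in(0,\delta)$ small, Proposition~\ref{supersatnew} promotes this to $\Omega(q^{(r-C\delta')n})$ such solutions, with $C=C(A)$. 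By the maximality of $r$, we may take $S$ containing no solution of rank $>r$, so these are all of affine rank exactly $r$. Applying Proposition~\ref{prop:disjointmaxrank} to each yields a pair of disjoint $r$-subsets of $[k]$ indexing affinely independent coordinates; a pigeonhole over the $O(1)$ possible pairs fixes a pair $(I,J)$ such that $\Omega(q^{(r-C\delta')n})$ of our solutions have $(x_i)_{i\in I}$ and $(x_j)_{j\in J}$ \emph{both} affinely independent.

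Let $f\in\F_q[x_{1,1},\ldots,x_{k,n}]$ be the indicator polynomial of $Ax^\transp=0$, of total degree $(q-1)mn$. For a weight $g:S^{[k]\setminus(I\cup J)}\to\F_q$, form the $|S|^r\times|S|^r$ matrix
\[
M_g(y,z)\;=\;\sum_{w\in S^{[k]\setminus(I\cup J)}}f(y,z,w)\,g(w).
\]
To upper bound $\rank(M_g)$, apply Lemma~\ref{tamechar}(ii) with $U=[k]\setminus(I\cup J)$: this gives $r_A(U)\ge m-r+1$, so at most $m-r_A(U)\le r-1$ rows of $A$ can be `pure', i.e.\ supported on $I\cup J$. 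After a change of basis we factor $f=f_{\mathrm{pure}}(y,z)\cdot f_{\mathrm{mix}}(y,z,w)$ with $\deg f_{\mathrm{pure}}\le (r-1)(q-1)n$. Choosing $g$ from a carefully designed random balanced distribution so that $\sum_w f_{\mathrm{mix}}(y,z,w)g(w)$ is in expectation of degree $o(n)$ in $(y,z)$, the Croot--Lev--Pach lemma applied to $M_g$ yields
\[
\rank(M_g)\le 2\,m_{q,rn,(r-1)(q-1)n/2}\le c^{\,rn}
\]
for some $c=c_{q,(r-1)/(2r)}<q$, as $(r-1)/(2r)<\tfrac{1}{2}$.

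On the other hand, decompose $M_g=M^{<r}_g+M^{=r,(I,J)}_g+M^{=r,\text{other}}_g$ according to the affine rank and the type of each contributing solution. Solutions of rank $<r$ and rank-$r$ solutions not of type $(I,J)$ both contribute only on entries $(y,z)$ with $y$ or $z$ affinely dependent; such entries live in $O(|S|^{r-1})$ rows and $O(|S|^{r-1})$ columns, so their combined rank is $O(|S|^{r-1})$. The central piece $M^{=r,(I,J)}_g$ receives the value $g((x_\ell)_{\ell\notin I\cup J})$ at $(y,z)=((x_i)_{i\in I},(x_j)_{j\in J})$ for each of our $\Omega(q^{(r-C\delta')n})$ chosen solutions. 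Since $(y,z)$ determines the other coordinates up to $O(1)$ combinatorial types, these contributing entries occupy $\Omega(q^{(r-C\delta')n})$ distinct rows and columns. A first/second moment computation over uniformly random $g$ then shows that with positive probability $\Omega(q^{(r-C\delta')n})$ of these entries are simultaneously nonzero in distinct rows \emph{and} distinct columns, so $\rank(M^{=r,(I,J)}_g)=\Omega(q^{(r-C\delta')n})$. Subadditivity of rank yields $\Omega(q^{(r-C\delta')n})\le c^{\,rn}+O(q^{(r-1)(1-\delta')n})$, a contradiction once $\delta'$ is chosen sufficiently small (small with respect to both $1-\log_q c$ and $1/(C-r+1)$).

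The main obstacle is the upper bound on $\rank(M_g)$: a straight CLP bound on the polynomial $f$ of degree $(q-1)mn$ on $\F_q^{rn}\times\F_q^{rn}$ gives only the trivial $q^{rn}$ whenever $r\le m$. Tameness must therefore be used both qualitatively, through Proposition~\ref{prop:disjointmaxrank}, to supply the pair $(I,J)$ with the doubly-independent structure driving the lower bound, and quantitatively, through Lemma~\ref{tamechar}(ii), to cap the number of pure rows at $r-1$ and thereby pin the effective degree of $M_g$ at $(r-1)(q-1)n$. Designing the random $g$ so that (a) its contribution in expectation cancels the $w$-dependence of the mixed rows (making CLP applicable at this reduced degree) while (b) it does not destroy the diagonal mass needed for the rank lower bound is the delicate technical heart of the argument.
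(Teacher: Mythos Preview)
Your outline tracks the paper's proof closely: the bootstrap on the maximal forcible affine rank $r$, supersaturation via Proposition~\ref{supersatnew}, Proposition~\ref{prop:disjointmaxrank} to produce disjoint $I,J$ of size $r$, a random weighting to form an $|S|^r\times|S|^r$ matrix, a CLP upper bound on its rank, and a structural lower bound yielding a contradiction. On the lower-bound side you are essentially right, though the paper controls the rank of the ``near-diagonal'' piece via Lemma~\ref{lem:blurdiagonal} (each row and column of the support of $M(\one_{T_3})$ has at most $q^{r(r-1)}$ nonzeros) rather than a moment argument; your term $O(q^{(r-1)(1-\delta')n})$ for the low-rank pieces should also read $O(q^{(r-1)n})$, since $|S|$ can be as large as $q^n$.

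The genuine gap is the upper bound. You correctly note that a direct CLP bound on $f$ (total degree $(q-1)mn$) against $\F_q^{rn}\times\F_q^{rn}$ is useless once $r\le m$, and that the design of $g$ must cure this. But your proposed cure---factor $f=f_{\mathrm{pure}}\cdot f_{\mathrm{mix}}$ with at most $r-1$ pure rows via Lemma~\ref{tamechar}(ii), then pick $g$ so that $\sum_w f_{\mathrm{mix}}(y,z,w)g(w)$ has $(y,z)$-degree $o(n)$---is not a construction: $f_{\mathrm{mix}}$ still carries up to $(m-p)(q-1)n$ degrees in $(y,z)$, and you give no distribution on $g$ that kills this while retaining the randomness needed for your lower bound. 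You yourself flag this as ``the delicate technical heart''; as written it is a hole, not a step.

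In fact the paper does \emph{not} use tameness for the upper bound at all. With $d=(q-1)n\,m/(2m+1)$, it takes
\[
V=\Bigl\{g:S\to\F_q:\ \sum_{z\in S} g(z)z^\alpha=0\ \text{for all }|\alpha|\le d\Bigr\},\qquad g\in W=V^{\otimes t},
\]
so that in $\sum_w f(x,y,w)g(w)$ only monomials with $|\gamma_i|>d$ for every $i$ survive; this forces $|\gamma|>td$ and hence $|\alpha|+|\beta|\le (q-1)mn-td=2rd$. CLP then gives $\rank M(\one_T)\le 2m_{q,rn,rd}$, and since $rd/(rn(q-1))=m/(2m+1)<\tfrac12$ this is $o(q^{(r-\epsilon)n})$. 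The only randomness is in the free coordinates of $g$ on a subset $S'\subseteq S$ with $|S\setminus S'|\le m_{q,n,d}$, which is precisely what drives the lower bound. Tameness enters the argument solely through Proposition~\ref{prop:disjointmaxrank}; your attempt to use it a second time, quantitatively, for the upper bound is a detour that does not close.
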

We will need the following lemma.
\begin{lemma}\label{lem:blurdiagonal}
Let $M$ be an $n\times n$ matrix with $N$ nonzero elements such that every row and every column has at most $k$ nonzero elements. Then $\rank(M)\geq \frac{N}{k^2}$.
\end{lemma}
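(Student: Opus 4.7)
The plan is to prove the contrapositive-style bound directly: given $r = \rank(M)$, I will show that the number of nonzero entries $N$ cannot exceed $rk^{2}$, which yields $\rank(M) \geq N/k^{2}$. The key observation is that a small-rank matrix cannot have nonzero entries scattered across too many columns, because every row lies in the span of a small set of rows.

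The execution proceeds in three short steps. First, pick $R \subseteq [n]$ with $|R| = r$ such that the rows of $M$ indexed by $R$ are linearly independent; then these rows span the entire row space of $M$. Second, let $C' = \bigcup_{i \in R} \supp(\text{row}_{i}(M))$ be the set of column indices where at least one of these $r$ rows has a nonzero entry. Since each of the $r$ rows has at most $k$ nonzero entries, we get the bound $|C'| \leq rk$. Third, observe that if some column index $j \notin C'$, then every row indexed by $R$ has a $0$ in column $j$, so every linear combination of these rows has a $0$ in column $j$; in particular, every row of $M$ has a $0$ in column $j$. Hence all nonzero entries of $M$ lie in the columns indexed by $C'$.

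Finally, since each of the at most $rk$ columns in $C'$ contains at most $k$ nonzero entries by hypothesis, we conclude $N \leq |C'| \cdot k \leq rk^{2}$, which rearranges to the desired bound $\rank(M) = r \geq N/k^{2}$.

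I do not foresee any serious obstacle — the lemma reduces to the elementary linear-algebra fact that a spanning set of rows determines the column support of the whole matrix, combined with the row/column sparsity hypothesis. The only subtlety is to remember to use both sparsity conditions: the row bound is used to bound $|C'|$, and the column bound is used to convert $|C'|$ back into a bound on $N$, which is exactly what produces the factor $k^{2}$.
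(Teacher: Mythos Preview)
Your proof is correct and is essentially the same as the paper's argument, just with the roles of rows and columns swapped: the paper picks a maximal independent set of \emph{columns}, takes the union $J$ of their row-supports (so $|J|\leq k\cdot r$), argues every nonzero entry of $M$ lies in a row of $J$, and then bounds $N\leq k|J|\leq k^2 r$. The two arguments are transposes of one another and use the two sparsity hypotheses in the same way.
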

\begin{proof}
Let $I\subseteq [n]$ be a maximal set of linearly independent columns of $M$. Let $J$ be the set of rows $j$ for which there is a column $i\in I$ with $M_{ij}\neq 0$. Then $|J|\leq k|I|$ and the support of $M$ must be contained in the rows indexed by $J$ by maximality of $I$. On the other hand, the rows in $J$ together contain at most $k|J|\leq k^2|I|$ nonzero entries. So $N\leq k^2|I|$.
\end{proof}

The remainder of this section is devoted to proving Theorem~\ref{thm:mainbasic}. It suffices to show the following for all $r\in [m+1]$:
\begin{equation*}
    \mathbf{P}(r):\ \parbox[t]{0.8\textwidth}{There exists a $\delta_{r}>0$ such that for all sufficiently large $n$, every subset $S\subseteq \F_q^n$ with $|S|\geq q^{(1-\delta_{r})n}$ contains a solution $x$ to $A{x}^\transp=0$ with $\arank(x)\geq r$.}
\end{equation*}
The proof is by induction on $r$. The base case $r=1$ is trivial since for any $v\in S$ a solution of affine rank~$1$ is given by $x=(v,\ldots, v)$. Hence, we can take $\delta_1=1$. 

Now let $r\leq m$ and assume that $\mathbf{P}(r)$ is true. We show that $\mathbf{P}(r+1)$ is true. Let $d=n(q-1)\tfrac{m}{2m+1}$. By Theorem~\ref{thm:slicerank}, there is an $\epsilon\in (0,1)$ such that $m_{q,n,d}=o(q^{(1-\epsilon)n})$ and $m_{q,rn,rd}=o(q^{rn-\epsilon n})$.

By Proposition~\ref{supersatnew}, there is a $\delta_{r+1}\in (0,\delta_r)$ such that every subset $S\subseteq \F_q^n$ with $|S|\geq q^{(1-\delta_{r+1})n}$ contains $\Omega(q^{rn-\epsilon n})$ solutions to $Ax^\transp=0$ of affine rank at least $r$. We take $\delta_{r+1}$ such that moreover $\delta_{r+1}\leq\epsilon$.  

Now suppose, for the sake of contradiction, that for infinitely many $n$ there is a subset $S\subseteq \F_q^n$ of size $|S|\geq q^{(1-\delta_{r+1})n}$ such that $S$ contains no solution to $Ax^\transp=0$ of affine rank at least $r+1$. Consider such pairs $(n,S)$. 

Let $T=\{x\in S^{2m+1}:Ax^\transp=0\}$ be the set of all solutions in $S$. By assumption, every $x\in T$ has affine rank at most $r$. Let $T_1:=\{x\in T:\arank(x)=r\}$. We have $|T_1|=\Omega(q^{rn-\epsilon n})$ by our choice of $\epsilon$. For every $x\in T_1$ there exist, by Proposition~\ref{prop:disjointmaxrank}, disjoint sets $I,J\subseteq [2m+1]$ of size $|I|=|J|=r$ such that 
$\arank \{x_i:i\in I\}=\arank \{x_i:i\in J\}=r$.
Since there are no more than $\binom{2m+1}{r}\binom{2m+1-r}{r}$ possibilities for $(I,J)$, we may assume without loss of generality that 
\[
T_2:=\{x\in T_1:\arank \{x_1,\ldots, x_r\}=\arank\{x_{r+1},\ldots, x_{2r}\}=r\}
\]
has size $|T_2|\geq \binom{2m+1}{r}^{-1}\binom{2m+1-r}{r}^{-1}|T_1|=\Omega(q^{rn-\epsilon n})$.

To emphasise the partition $[2m+1]=\{1,\ldots, r\}\cup\{r+1,\ldots, 2r\}\cup\{2r+1,\ldots 2m+1\}$ of the variables and to simplify notation, we will define $t:=2m+1-2r$ and denote elements from $S^{2m+1}$ by 3-tuples $(x,y,z)$, where $x, y\in S^r$ and $z\in S^t$.

We will construct a random linear map from $\F_q^{S^{2m+1}}$ to $\F_q^{S^r}\times \F_q^{S^r}$ and interpret the elements of $\F_q^{S^r}\times \F_q^{S^r}$ as $|S|^r\times |S|^r$ matrices. We will give an upper bound on the rank of the resulting matrix that is lower than the expected rank for large $n$, thus yielding a contradiction.

Recall that $d=n(q-1)\tfrac{m}{2m+1}$. Let  
\[V=\{g:S\to \F_q:\sum_{z\in S} g(z)z^\alpha=0\quad\forall \alpha\in M_{q,n,d}\}.\]
Since $|S|\geq q^{(1-\delta_{r+1})n}\geq q^{(1-\epsilon)n}$ and $m_{q,n,d}=o(q^{(1-\epsilon)n})$, we have $\dim(V)\geq |S|-m_{q,n,d}>0$ (for $n$ large enough). By Lemma~\ref{lem:support}, there is a subset $S'\subseteq S$ of size $|S'|=\dim(V)$ such that every $h:S'\to \F_q$ extends to a unique $g\in V$. Let $W=V^{\otimes t}\subseteq \{g:S^t\to \F_q\}$ be the linear span of the functions $z\mapsto g_1(z_1)\cdots g_t(z_t)$, where $g_1,\ldots, g_t\in V$. Then for every $g\in W$ 
\begin{equation}
\sum_{z\in S^t} g(z)z_1^{\alpha_1}\cdots z_t^{\alpha_{t}}=0 \text{ if $|\alpha_i|\leq d$ for some $i\in[t]$}
\end{equation}
and every $h:(S')^t\to \F_q$ extends to a unique $g\in W$.

We choose $h:(S')^t\to \F_q$ uniformly at random and let $g\in W$ be the extension of $h$. For any $f:S^{2m+1}\to \F_q$ we define the random $S^r\times S^r$ matrix $M(f)$ as follows:
\[
M(f)_{x,y}:=\sum_{z\in S^t} f(x,y,z)g(z).
\]
For any subset $X\subseteq S^{2m+1}$ we denote by $\one_X:S^{2m+1}\to \F_q$ the $0$--$1$ characteristic function of~$X$. 

\begin{claim}
The matrix $M(\one_T)$ has rank at most $2m_{q,rn,rd}$.
\end{claim}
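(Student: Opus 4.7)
The plan is to mimic the Croot--Lev--Pach lemma in a three-group setting, treating $x,y \in S^r$ as ``row/column'' indices of the matrix and using $g$ to integrate out $z \in S^t$. First I would realize $\one_T$ as a reduced polynomial on $(\F_q^n)^{2m+1}$: the system $A x^\transp=0$ is the conjunction of $mn$ scalar $\F_q$-linear equations (the $m$ rows of $A$ applied in each of the $n$ coordinates), and since $\one(u=0)=1-u^{q-1}$ on $\F_q$, the indicator $\one_T$ is a polynomial of total degree at most $(q-1)mn = (2m+1)d$ in the scalar coordinates of $(x,y,z)$, with every variable appearing to exponent at most $q-1$.

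Writing $\one_T(x,y,z)=\sum c_{\alpha,\beta,\gamma}\,x^\alpha y^\beta z^\gamma$ with $\alpha,\beta \in \{0,\ldots,q-1\}^{rn}$ and $\gamma=(\gamma_1,\ldots,\gamma_t) \in \{0,\ldots,q-1\}^{tn}$, substitution into $M(\one_T)_{x,y}=\sum_z \one_T(x,y,z)g(z)$ yields
\[
M(\one_T)_{x,y} \;=\; \sum_{\alpha,\beta,\gamma} c_{\alpha,\beta,\gamma}\, x^\alpha y^\beta\, S_\gamma, \qquad S_\gamma := \sum_{z \in S^t} g(z)\, z^\gamma.
\]
Because $W=V^{\otimes t}$ is spanned by tensor products $g_1 \otimes \cdots \otimes g_t$, on each such rank-1 tensor $S_\gamma$ factors as $\prod_{i=1}^t \sum_{z_i} g_i(z_i)\, z_i^{\gamma_i}$, and by the defining property of $V$ each factor vanishes as soon as $|\gamma_i| \leq d$. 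Extending by linearity, $S_\gamma = 0$ unless $|\gamma_i| > d$ for every $i \in [t]$, which forces $|\gamma| > td = (2m+1-2r)d$ on every surviving term.

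Combined with the total-degree bound $|\alpha|+|\beta|+|\gamma| \leq (2m+1)d$, this gives $|\alpha|+|\beta| < 2rd$ on every nonzero term, so $\min(|\alpha|,|\beta|) \leq rd$. I would then split the surviving terms into a ``group 1'' with $|\alpha| \leq rd$ and a ``group 2'' with $|\beta| \leq rd$ (placing ties arbitrarily). Group 1 rewrites as $\sum_{\alpha:\,|\alpha|\leq rd} x^\alpha\, h_\alpha(y)$ with $h_\alpha(y):=\sum_{\beta,\gamma}c_{\alpha,\beta,\gamma}\,y^\beta S_\gamma$, a sum of at most $m_{q,rn,rd}$ rank-1 matrices; group 2 is symmetric. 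Adding gives $\rank(M(\one_T)) \leq 2 m_{q,rn,rd}$.

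The delicate step is the third one: translating the tensor structure of $W$ into a per-coordinate vanishing $|\gamma_i|>d$, rather than merely an aggregate bound on $|\gamma|$. This is precisely what lets the degree budget balance: the $z$-side vanishing ``uses up'' $td$ of the total degree $(2m+1)d$, leaving exactly $2rd$ of slack for $(\alpha,\beta)$, which is the CLP threshold that makes the splitting work. Everything else is routine polynomial bookkeeping once the threshold $d=(q-1)nm/(2m+1)$ is lined up with the degree of $\one_T$ and the $z$-block size $t=2m+1-2r$.
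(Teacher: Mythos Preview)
Your proof is correct and follows essentially the same approach as the paper's: represent $\one_T$ as a reduced polynomial of degree $(2m+1)d$, use the vanishing property of $g\in W=V^{\otimes t}$ to eliminate all $z$-monomials with some $|\gamma_i|\le d$ (hence all with $|\gamma|\le td$), conclude that the surviving terms satisfy $|\alpha|+|\beta|<2rd$, and then apply the CLP splitting on the $2rn$ remaining variables. The only cosmetic differences are that you re-derive the tensor-factorization vanishing which the paper already stated as a defining property of $W$, and you unpack the CLP lemma by hand rather than citing it.
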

\begin{proofclaim}
We can write 
\[
\one_T(x,y,z)=\prod_{i=1}^m \prod_{\ell=1}^n \left[1-\left(\sum_{j=1}^r a_{i,j} x_{j,\ell}+\sum_{j=1}^r a_{i,j+r} y_{j,\ell}+\sum_{j=1}^t a_{i,j+2r} z_{j,\ell}\right)^{q-1}\right].
\]
So $\one_T$ is given by a polynomial of degree at most $mn(q-1)$. Expanding the polynomial, we get   
\[
\one_T(x,y,z)=\sum_{(\alpha,\beta,\gamma)\in L} c_{\alpha\beta\gamma} x^\alpha y^\beta z^\gamma
\]
where $L:=\{\ell\in \{0,1,\ldots, q-1\}^{[n]\times [2m+1]}: |\ell|\leq mn(q-1)\}$, and the $c_{\alpha\beta\gamma}\in \F_q$ are certain coefficients. Thus, we have 
\[
M(\one_T)_{x,y}=\sum_{\alpha, \beta} c'_{\alpha,\beta}x^{\alpha}y^{\beta},
\]
where 
\[
c'_{\alpha,\beta}=\sum_{\gamma:(\alpha,\beta,\gamma)\in L} c_{\alpha,\beta,\gamma} \sum_{z\in S} g(z)z^\gamma.
\]
If $|\gamma|\leq td$, then $|\gamma_i|\leq d$ for some $i\in[t]$, and hence $\sum_{z\in S}g(z)z^\gamma=0$ by definition of $g$. It follows that $c'_{\alpha,\beta}=0$ if $|\alpha|+|\beta|\geq mn(q-1)-td=2rd$. So $M(\one_T)$ is given by a polynomial of degree at most $2rd$ in the $rn$ variables $x_{ji}$ and the $rn$ variables $y_{ji}$ (where $(j,i)\in [r]\times [n]$). Hence, it follows from the CLP lemma that $M(\one_T)$ has rank at most $2m_{q,rn,rd}$.
\end{proofclaim}

We will now show the following claim.
\begin{claim}
The matrix $M(\one_T)$ has expected rank $\Omega(q^{rn-\epsilon n})$.
\end{claim}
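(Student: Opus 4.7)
The plan is to apply Lemma~\ref{lem:blurdiagonal} to the submatrix $M'$ of $M(\one_T)$ indexed by pairs $(x,y)\in S^r\times S^r$ with $\arank(x)=\arank(y)=r$. A useful preliminary observation is that for such $(x,y)$, every triple $(x,y,z)\in T$ automatically lies in $T_2$: by the standing hypothesis $\arank(x,y,z)\leq r$ while trivially $\arank(x,y,z)\geq \arank(x)=r$, and $\arank(y)=r$ by choice. Hence $M'_{x,y}=\sum_{z\in Z_{x,y}}g(z)$, where $Z_{x,y}:=\{z\in S^t : (x,y,z)\in T_2\}$. Since any triple in $T_2$ lies in the affine span $V_x$ of $\{x_1,\ldots,x_r\}$, an affine subspace of dimension $r-1$ with $q^{r-1}$ points, we have $y\in V_x^r$ whenever $Z_{x,y}\neq\emptyset$, and $|Z_{x,y}|\leq |V_x|^t=q^{t(r-1)}$. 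In particular, each row and column of $M'$ has at most $q^{r(r-1)}$ nonzero entries, and the set $P_0:=\{(x,y):Z_{x,y}\neq\emptyset\}$ satisfies
\[
|P_0| \geq \frac{|T_2|}{q^{t(r-1)}} = \Omega(q^{rn-\epsilon n}).
\]

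The main step is to exhibit many pairs for which $M'_{x,y}$ is nonzero with constant probability. I would pass to $P_0':=\{(x,y)\in P_0 : Z_{x,y}\subseteq (S')^t\}$: for these pairs $g(z)=h(z)$ at each $z\in Z_{x,y}$, so $M'_{x,y}=\sum_{z\in Z_{x,y}}h(z)$ is a nonempty sum of distinct coordinates of the uniform random vector $h\in\F_q^{(S')^t}$, hence itself uniform on $\F_q$ and nonzero with probability $(q-1)/q$. To show $|P_0\setminus P_0'|=o(q^{rn-\epsilon n})$, I would count, for each $j\in[t]$ and $v\in S\setminus S'$, the triples in $T_2$ with $z_j=v$: any such triple has affine rank $r$ and contains $v$, so it lies in one of $\qbin{n}{r-1}=O(q^{(r-1)n})$ affine subspaces of dimension $r-1$ through $v$, each hosting at most $q^{(r-1)\cdot 2m}$ such triples. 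Summing over $j$ and $v$ and using $|S\setminus S'|\leq m_{q,n,d} = o(q^{(1-\epsilon)n})$ gives the bound, and hence $|P_0'|=\Omega(q^{rn-\epsilon n})$.

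By linearity of expectation, the expected number of nonzero entries of $M'$ is at least $\tfrac{q-1}{q}|P_0'|=\Omega(q^{rn-\epsilon n})$, so applying Lemma~\ref{lem:blurdiagonal} pointwise in $g$ and taking expectations yields
\[
\mathbb{E}[\rank(M(\one_T))] \geq \mathbb{E}[\rank(M')] \geq \frac{\mathbb{E}[N]}{q^{2r(r-1)}} = \Omega(q^{rn-\epsilon n}),
\]
where $N$ denotes the (random) number of nonzero entries of $M'$. The main obstacle I expect is the counting estimate for $|P_0\setminus P_0'|$: we need a sharp bound on the number of rank-$r$ solutions in $S^{2m+1}$ that are pinned through a prescribed value in a single coordinate, which is what allows us to discard the case $z\notin(S')^t$ and reduce to the transparent probabilistic argument on $P_0'$.
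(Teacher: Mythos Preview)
Your argument is correct and uses the same key ingredients as the paper (the row/column sparsity bound $q^{r(r-1)}$, Lemma~\ref{lem:blurdiagonal}, the estimate $|T_2\setminus T_3|=o(q^{rn-\epsilon n})$, and the observation that on $(S')^t$ the function $g$ coincides with the uniform random $h$), but the organization differs. The paper splits $\one_T=\one_{T_3}+\one_{T_2\setminus T_3}+\one_{T\setminus T_2}$, lower-bounds $\rank M(\one_{T_3})$ via Lemma~\ref{lem:blurdiagonal}, and then subtracts off the small ranks of $M(\one_{T_2\setminus T_3})$ and $M(\one_{T\setminus T_2})$ using subadditivity. You instead pass to the submatrix $M'$ indexed by $\{x:\arank(x)=r\}\times\{y:\arank(y)=r\}$; your key observation that any $(x,y,z)\in T$ with $\arank(x)=\arank(y)=r$ already lies in $T_2$ eliminates the piece $T\setminus T_2$ for free, and you absorb the $T_2\setminus T_3$ contribution into a pair-counting loss $|P_0\setminus P_0'|\leq|T_2\setminus T_3|$ rather than a rank subtraction. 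Your route is slightly more streamlined (no three-term rank inequality, and no separate $O(q^{(r-1)n})$ bound for $\rank M(\one_{T\setminus T_2})$), while the paper's decomposition makes the roles of the three error terms more explicit; both ultimately reduce to the same estimate $\mathbb{E}[\rank]\geq \tfrac{q-1}{q}\cdot q^{-2r(r-1)}\cdot\Omega(q^{rn-\epsilon n})$.
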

\begin{proofclaim}
Let $T_3=\{(x,y,z)\in T_2: z\in (S')^t\}$. For any fixed $b\in \F_q^n$ and any $i\in [t]\}$ we have $|\{(x,y,z)\in T_2:z_i=b\}|=O(q^{(r-1)n})$.  Since $|S\setminus S'|\leq m_{q,n,d}=o(q^{(1-\epsilon)n})$, it follows that
\begin{equation}
|T_2\setminus T_3|=o(q^{rn-\epsilon n}),\qquad |T_3|=\Omega(q^{rn-\epsilon n}).
\end{equation}

Let $R=\{(x,y)\in S^{r}\times S^{r}:\exists z\in S^t, (x,y,z)\in T_3\}$. Since for each pair $(x,y)$ there are at most $q^{rt}$ possibilities for $z$ (as each $z_i$ is in the affine span of $\{x_i:i\in[r]\}$), it follows that $|R|\geq \frac{|T_3|}{q^{rt}}=\Omega(q^{rn-\epsilon n})$. Let $(x,y)\in R$. Then  
\[
M(\one_{T_3})_{x,y}=\sum_{z\in S^t} \one_{T_3}(x,y,z)g(z)=\sum_{z\in (S')^t:(x,y,z)\in T_3}h(z).
\]
Since $h$ was chosen uniformly at random and $\{z\in (S')^t: (x,y,z)\in T_3\}$ is nonempty, this sum is nonzero with probability $\tfrac{q-1}{q}$. So the expected size of the support of $M(\one_{T_3})$ is $\tfrac{q-1}{q}|R|$.

For every $x$, there are at most $q^{r(r-1)}$ values of $y$ for which $(x,y)\in R$ since $y_1,\ldots, y_r$ are in the affine hull of $x_1,\ldots, x_r$. Conversely, for every $y$ there are at most $q^{r(r-1)}$ values of $x$ for which $(x,y)\in R$. So by Lemma~\ref{lem:blurdiagonal} we have 
\begin{equation}
\rank M(\one_{T_3})\geq \tfrac{q-1}{q}|R|/(q^{2r(r-1)})=\Omega(q^{rn-\epsilon n}).
\end{equation}

Recall that for any $(x,y,z)\in T\setminus T_2$ we have $\arank(x)\leq r-1$ or $\arank(y)\leq r-1$. Hence, the number of possibilities for $x$ (resp. $y$) is $O(q^{(r-1)n})$ and therefore
\begin{equation}
    \rank M(\one_{T\setminus T_2})=O(q^{(r-1)n}).
\end{equation}

Using the fact that $\rank M(\one_{T_2\setminus T_3})\leq |T_2\setminus T_3|=o(q^{rn-\epsilon n})$, we conclude that 
\[
\rank M(\one_T)\geq \rank M(\one_{T_3})-\rank M(\one_{T_2\setminus T_3})-\rank M(\one_{T\setminus T_2})=\Omega(q^{rn-\epsilon n}).
\]
\end{proofclaim}

Since $m_{q,rn,rd}=o(q^{rn-\epsilon n})$, the two claims yield a contradiction for large enough $n$. This concludes the proof of Theorem~\ref{thm:mainbasic}.

\begin{remark}
In the proof, we took $r=1$ as the base case. However the case $r=2$ also follows directly from Theorem~\ref{thm:slicerank}, so we could have taken $r=2$ as a base case as well. 
\end{remark}

\section{Proof of Theorem B}\label{sec:thmB}
In this section we consider the special case that the finite field is $\F_q$, where $q\in\{2,3\}$.  It turns out that in this case, every balanced system is temperate, by a much simpler argument. The case $q=2$ is implicit in \cite[Lemma 21]{bonin2000size} in the context of extremal matroid theory. 

If $b_0,b_1,\ldots, b_d\in \F_q^n$ and $b_1,\ldots, b_d$ are linearly independent, then $V=\{b_0+\alpha_1b_1+\cdots+\alpha_db_d:\alpha_1,\ldots, \alpha_d\in \F_q\}$ is a \emph{$d$-dimensional affine subspace} of $\F_q^n$. A $1$-dimensional subspace is also called a \emph{line}. 

The main ingredient of the proof of Theorem B is the following lemma.
\begin{lemma}
Fix $q\in\{2,3\}$. Let $d\in \mathbb{Z}_{\geq 1}$. Then there exist $n_d\in \mathbb{Z}_{\geq 1}$, $C_d>0$ and $\delta_d\in (0,1]$ such that for all $n\geq n_d$, every $S\subseteq \F_q^n$ of size $|S|\geq C_dq^{(1-\delta_d)n}$ contains an affine $d$-dimensional subspace.
\end{lemma}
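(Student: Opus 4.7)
The plan is to prove the lemma by induction on $d$. For the base case $d=1$, a $1$-dimensional affine subspace is a line with $q$ points: when $q=2$ any two distinct points form a line so $|S|\geq 2$ suffices, and when $q=3$ a line is exactly a non-trivial three-term arithmetic progression, so the required constants are supplied by the cap set theorem (Theorem~\ref{thm:slicerank} with $(m,k)=(1,3)$).

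For the inductive step, assume the conclusion holds for $d-1$ with parameters $n_{d-1},C_{d-1},\delta_{d-1}$, and let $S\subseteq \F_q^n$ satisfy $|S|\geq C_d q^{(1-\delta_d)n}$. The first step is to lower-bound the number $L(S)$ of affine lines contained in $S$: for $q=2$ we simply have $L(S)=\binom{|S|}{2}=\Omega(|S|^2)$, while for $q=3$ Proposition~\ref{supersatnew} applied to the three-term arithmetic progression equation with $r=2$ (using the cap set theorem to verify its hypothesis) gives $L(S)=\Omega(q^{(2-c_0\delta_d)n})$ for a constant $c_0=c_0(q)$. For each direction $v\in \F_q^n$ set $S_v:=\bigcap_{\alpha\in \F_q}(S-\alpha v)$, so that $x\in S_v$ iff the whole line $\{x+\alpha v:\alpha\in \F_q\}$ lies in $S$. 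Double counting (starting point, nonzero direction)-representations of lines in $S$ yields $\sum_{v\neq 0}|S_v|=q(q-1)L(S)$, and averaging over the $q^n-1$ nonzero directions produces some $v\neq 0$ with $|S_v|=\Omega(q^{(1-c_0\delta_d)n})$.

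Since $S_v$ is a union of cosets of the $1$-dimensional subspace $\F_q v$, the quotient map $\pi\colon \F_q^n\to \F_q^n/\F_q v\cong \F_q^{n-1}$ satisfies $|\pi(S_v)|=|S_v|/q$, and every fiber $\pi^{-1}(y)$ with $y\in \pi(S_v)$ is entirely contained in $S$ by construction of $S_v$. Choosing $\delta_d$ small enough relative to $\delta_{d-1}$ (for instance $\delta_d:=\delta_{d-1}/(2c_0)$) and $n_d$ large enough to absorb additive constants, we ensure $|\pi(S_v)|\geq C_{d-1}q^{(1-\delta_{d-1})(n-1)}$, so the induction hypothesis supplies a $(d-1)$-dimensional affine subspace $T\subseteq \pi(S_v)$. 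The preimage $\pi^{-1}(T)\subseteq S$ is then a $d$-dimensional affine subspace of $\F_q^n$, closing the induction. The only technical point is bookkeeping the parameter recursion, but with $\delta_d$ decreasing geometrically in $d$ this poses no real obstacle.
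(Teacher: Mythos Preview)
Your proof is correct and follows a genuinely different route from the paper's argument. Both arguments share the same base case and the same overall induction on $d$, but the mechanics of the step from $d-1$ to $d$ differ.

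The paper fixes a coordinate split $\F_q^n=\F_q^k\times\F_q^m$ with carefully chosen $m,k$; for each $x\in\F_q^k$ it looks at the fibre $S_x\subseteq\F_q^m$, uses the $d=1$ case to find a line in every dense fibre, and then pigeonholes over the at most $q^{2m}$ possible lines in $\F_q^m$ to produce a large set $T'\subseteq\F_q^k$ of $x$'s sharing a common line $\ell$; induction on $T'$ yields a $(d-1)$-flat $V$, and $V\times\ell$ is the desired $d$-flat. Your argument instead invokes Proposition~\ref{supersatnew} (for $q=3$) to count lines in $S$ directly, averages over nonzero directions to locate a popular direction $v$ with $|S_v|$ large, and then quotients by $\F_q v$ so that the induction hypothesis can be applied to $\pi(S_v)\subseteq\F_q^{n-1}$; the preimage of the resulting $(d-1)$-flat is the desired $d$-flat. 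Your approach is conceptually cleaner---``find a popular direction, mod out, recurse''---and exploits machinery already developed in the paper, whereas the paper's version is more self-contained, relying only on a crude pigeonhole over a bounded-size family of lines rather than on the quantitative supersaturation statement. Either route yields a doubly-exponential decay of $\delta_d$ in $d$.
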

The case $q=3$, the \emph{multidimensional cap set problem}, was proved in~\cite{fox2019popular} by using the arithmetic triangle removal lemma from~\cite{fox2017tight}. A more direct proof (but with slightly worse bounds) due to Jop Bri\"et \cite{Jop} (personal communication) is given below. 

\begin{proof}
If $q=3$, we can take $n_1=1$, $C_1=3$, and $\delta_1=1-\log_3(2.756)$ by the cap set theorem. If $q=2$, we can take $n_1=1$, $C_1=2$, and $\delta_1=1$. 

Now let $t\geq 2$ and suppose that the statement is true for $d=1,\ldots, t-1$. We define 
\[C_t:=q(C_1+C_{t-1}),\quad \delta_t:=\frac{\delta_1\delta_{t-1}}{2+\delta_1+\delta_{t-1}}, \quad n_t:=\left\lceil\max\left(\frac{2+\delta_1+\delta_{t-1}}{\delta_{t-1}}n_{1},\frac{2+\delta_1+\delta_{t-1}}{2+\delta_1}n_{t-1}\right)\right\rceil.\]
Note that $0<\delta_t\leq1$.
Let $n\geq n_t$ and let $S\subseteq \F_q^n$ have size $|S|\geq C_tq^{(1-\delta_t)n}$. We define
\[
m=\left\lfloor\frac{\delta_{t-1}}{2+\delta_1+\delta_{t-1}}n\right\rfloor,\qquad k=\left\lceil\frac{2+\delta_1}{2+\delta_1+\delta_{t-1}}n\right\rceil=n-m.
\]
By the choice of $n_t$ we have $m\geq n_{1}$ and $k\geq n_{t-1}$. For any $x\in \F_q^k$ we denote $S_x:=\{y\in \F_q^m: (x,y)\in S\}$. Let $T=\{x\in \F_q^k:|S_x|\geq C_1q^{(1-\delta_1)m}\}$. We have
\[
|S|=\sum_{x\in T} |S_x|+\sum_{x\not\in T} |S_x|\leq |T|q^m+C_1q^{k+(1-\delta_1)m}.
\]
Since $\delta_1 m+1\geq \delta_tn$, we have $(1-\delta_t)n+1\geq k+(1-\delta_1)m$. Similarly, we have $(1-\delta_t)n+1\geq 3m+(1-\delta_{t-1})k$. It follows that
\[
|S|\geq C_tq^{(1-\delta_t)n}\geq C_{1}q^{k+(1-\delta_1)m}+C_{t-1}q^{3m+(1-\delta_{t-1})k}.
\]
Hence, it follows that 
\[
|T|\geq \frac{|S|-C_1q^{k+(1-\delta_1)m}}{q^m}\geq q^{2m}C_{t-1}q^{(1-\delta_{t-1})k}.
\]
For every $x\in T$, there is a line $\ell$ contained in $S_x$. Since the number of lines in $\F_q^m$ is less than $q^{2m}$, there is a subset $T'\subseteq T$ of size $|T'|\geq |T|q^{-2m}$ and a line $\ell\subseteq \F_q^m$ such that $\ell\subseteq S_x$ for all $x\in T'$. Since $|T'|\geq C_{t-1}q^{(1-\delta_{t-1})k}$ there is a ($t-1$)-dimensional subspace $V\subseteq T'$. Hence $\{(x,y):x\in V, y\in \ell\}$ is a $t$-dimensional subspace contained in $S$.
\end{proof}

We restate Theorem~B.
\begin{theorem}
Let $q\in\{2,3\}$ and let $A\in \F_q^{m\times k}$ be a row-balanced matrix of rank $m$. Then $Ax^\transp=0$ is temperate.
\end{theorem}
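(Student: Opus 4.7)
The plan is to reduce Theorem~B to the multidimensional subspace-containing lemma that was just proved, using the fact that balanced linear systems behave nicely under affine maps.

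First I would set $d := k-m-1$, and invoke the preceding lemma with this value of $d$ to obtain constants $n_d$, $C_d$, and $\delta_d \in (0,1]$ such that any $S \subseteq \F_q^n$ of size at least $C_d q^{(1-\delta_d)n}$ (with $n \geq n_d$) contains a $d$-dimensional affine subspace $V \subseteq S$. In parallel, Lemma~\ref{lem:lowdim} provides a generic solution $z = (z_1,\ldots,z_k)$ to $Ax^\transp = 0$ living in $\F_q^{k-m-1} = \F_q^d$.

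Next I would exploit the key fact that balanced systems, and the property of being a generic solution, are both preserved under affine isomorphisms. Fix an affine isomorphism $\phi: \F_q^d \to V$, written as $\phi(y) = Ly + v_0$ for a linear isomorphism $L$ from $\F_q^d$ onto the linear part of $V$. For every $i \in [m]$, balancedness of $A$ gives
\[
\sum_{j=1}^{k} a_{ij}\,\phi(z_j) = L\Bigl(\sum_{j=1}^{k} a_{ij} z_j\Bigr) + v_0 \sum_{j=1}^{k} a_{ij} = L(0) + v_0 \cdot 0 = 0,
\]
so $(\phi(z_1),\ldots,\phi(z_k))$ is a solution in $V \subseteq S$. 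Moreover, if $(\mu_1,\ldots,\mu_k) \in \Annbal(\phi(z_1),\ldots,\phi(z_k))$, then $L(\sum_j \mu_j z_j) = \sum_j \mu_j \phi(z_j) - v_0 \sum_j \mu_j = 0$, and since $L$ is injective this forces $(\mu_j) \in \Annbal(z_1,\ldots,z_k) = \rowspace(A)$ by genericity of $z$. Hence $\phi(z)$ is itself a generic solution, and it lies in $S$.

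Putting these pieces together, for $n \geq n_d$ every $S \subseteq \F_q^n$ of size at least $C_d q^{(1-\delta_d)n}$ contains a generic solution to $Ax^\transp = 0$; equivalently, any $S$ avoiding generic solutions satisfies $|S| = O(q^{(1-\delta_d)n})$, which is exactly the temperate property with $\delta = \delta_d$. There is no real obstacle here beyond correctly recording the affine-invariance calculation; all the analytic difficulty has been absorbed into the previous lemma, which itself reduces via a simple density-increment argument to the (multi)dimensional cap-set theorem over $\F_3$ and the trivial subspace-finding step over $\F_2$.
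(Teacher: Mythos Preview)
Your proposal is correct and follows essentially the same approach as the paper: set $d=k-m-1$, find a $d$-dimensional affine subspace in $S$ via the preceding lemma, and push the generic solution from Lemma~\ref{lem:lowdim} into that subspace via an affine map. The only cosmetic difference is that the paper verifies genericity of the transported solution by noting its affine rank is $k-m$ (then appealing to Lemma~\ref{lem:generic}), whereas you check directly that $\Annbal$ is preserved under the injective affine map; both arguments are equivalent.
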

\begin{proof}
Let $d=k-m-1$. Then $\F_q^d$ contains a generic solution $y$ to $Ax^\transp=0$ by Lemma~\ref{lem:lowdim}. So $y$ has affine rank $m-k$. 

Now let $n\geq n_d$ and let $S\subseteq \F_q^n$ have size $|S|\geq C_dq^{(1-\delta_d)n}$. Then $S$ contains a $d$-dimensional affine subspace $V=\{b_0+\alpha_1b_1+\cdots+\alpha_db_d:\alpha_1,\ldots, \alpha_d\in \F_q\}$. Setting $x_i=b_0+y_{i1}b_1+\cdots+y_{id}b_d$, we obtain a solution to $Ax^\transp=0$ where $x$ has affine rank $k-m$ and $x_1,\ldots, x_k\in S$.
\end{proof}

\section{Affine copies}\label{sec:aff}
Given a balanced matrix $A\in \F_q^{m\times k}$ of rank $m$, there is a generic solution $z=(z_1,\ldots, z_k)$ to $Ax^\transp=0$ in $\F_q^{k-m-1}$ by Lemma~\ref{lem:lowdim}. Conversely, if $z=(z_1,\ldots, z_k)\in (\F_q^r)^k$, then $z$ is a generic solution to $Ax^\transp=0$, where $A\in \F_q^{m\times k}$ has rows that are a basis of $\Annbal(z)$. Instead of saying that $S\subseteq \F_q^n$ has a generic solution to $Ax^\transp=0$, we can equivalently say that $S$ contains an affine copy of $z$. Thus, we can reformulate Theorem~\ref{thm:mainA} and Theorem~\ref{thm:mainB} and obtain Corollary~C. 

\begin{definition}
Let $z_1,\ldots, z_k\in \F_q^r$ and $x_1,\ldots, x_k\in \F_q^n$. We say that $x=(x_1,\ldots, x_k)$ is an \emph{affine copy} of $z=(z_1,\ldots, z_k)$ if there is an injective linear map $L:\F_q^r\to \F_q^n$ and a vector $a\in \F_q^n$ such that $x_i=L(z_i)+a$ for all $i\in [k]$.
\end{definition}
Note that if $x$ is an affine copy of $z$, then any translate of $x$ is an affine copy of any translate of $z$.

\begin{lemma}\label{lem:affinecopy}
Let $z=(z_1,\ldots, z_k)\in (\F_q^r)^k$. Let $A\in \F_q^{m\times k}$ be such that the rows form a basis of $\Annbal(z)$. Let $n\geq r$ and let $x=(x_1,\ldots, x_k)\in (\F_q^n)^k$. Then $x$ is an affine copy of $z$ if and only if $x$ is a generic solution to $Ax^\transp=0$. 
\end{lemma}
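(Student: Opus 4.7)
The plan is to prove both directions of the equivalence directly, with the heart of the argument being that equality of the spaces $\Annbal(z)$ and $\Annbal(x)$ is exactly what encodes the existence of a well-defined and injective linear map between the affine hulls.

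For the forward direction, I assume $x_i = L(z_i)+a$ for all $i$, with $L:\F_q^r\to\F_q^n$ injective linear. Given any $(\mu_1,\ldots,\mu_k)\in\F_q^k$ with $\sum_i\mu_i=0$, I compute $\sum_i \mu_i x_i = L\bigl(\sum_i \mu_i z_i\bigr) + \bigl(\sum_i\mu_i\bigr)a = L\bigl(\sum_i \mu_i z_i\bigr)$. Since $L$ is injective, this vanishes if and only if $\sum_i \mu_i z_i=0$. Hence $\Annbal(x)=\Annbal(z)=\rowspace(A)$, so in particular $Ax^\transp=0$ and $x$ is a generic solution.

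For the reverse direction, assume $\Annbal(x)=\rowspace(A)=\Annbal(z)$. Let $W=\spn\{z_i-z_1:i\in[k]\}\subseteq\F_q^r$ and $W'=\spn\{x_i-x_1:i\in[k]\}\subseteq\F_q^n$. I define $L_0:W\to W'$ on the spanning set by $L_0(z_i-z_1)=x_i-x_1$. A relation $\sum_{i=2}^k \lambda_i(z_i-z_1)=0$ is equivalent to $(-\sum_{i\geq 2}\lambda_i,\lambda_2,\ldots,\lambda_k)\in\Annbal(z)$; by hypothesis this is the same set as $\Annbal(x)$, which in turn encodes the same relation for the $x_i-x_1$. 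Thus $L_0$ is simultaneously well-defined and injective on $W$ (and surjective onto $W'$ by construction).

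It remains to extend $L_0$ to an injective linear map $L:\F_q^r\to\F_q^n$; this is where the hypothesis $n\geq r$ is used. I pick a basis of $W$, extend it to a basis $b_1,\ldots,b_r$ of $\F_q^r$, then extend the $L_0$-image of the basis of $W$ to $r$ linearly independent vectors in $\F_q^n$ (possible because $\dim W'\leq r\leq n$) and declare these to be the images of $b_1,\ldots,b_r$. Setting $a=x_1-L(z_1)$, the identity $L(z_i)+a = L_0(z_i-z_1)+x_1 = (x_i-x_1)+x_1 = x_i$ shows that $x$ is an affine copy of $z$. I do not expect any serious obstacle here; the only subtle point is verifying well-definedness of $L_0$, which is exactly where the equality $\Annbal(x)=\Annbal(z)$ is used.
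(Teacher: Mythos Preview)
Your proof is correct and follows essentially the same approach as the paper's: both hinge on the observation that $\Annbal(x)=\Annbal(z)$ is precisely what makes the natural linear map between the affine hulls well-defined and injective, and both extend to an injective map $\F_q^r\to\F_q^n$ using $n\geq r$. The only cosmetic difference is that the paper first translates so that $z_k=x_k=0$ and works with $\spn\{z_1,\ldots,z_{k-1}\}$, whereas you work directly with the differences $z_i-z_1$ and $x_i-x_1$; these are equivalent reductions.
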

\begin{proof}By translating, we may assume without loss of generality that $z_k=0$ and $x_k=0$. Note that $z=(z_1,\ldots, z_k)$ is a generic solution to $Ax^\transp=0$. 

Suppose that $x$ is an affine copy of $z$. Then $x_i=L(z_i)$ for all $i\in [k]$, where $L:\F_q^r\to \F_q^n$ is an injective linear map. It is easy to verify that $\Annbal(x)=\Annbal(z)$. So $x$ is a generic solution to $Ax^\transp=0$. 

Conversely, suppose that $x$ is a generic solution to $Ax^\transp=0$. Define $L:\spn\{z_1,\ldots, z_{k-1}\}\to \spn\{x_1,\ldots, x_{k-1}\}$ by
\[
L(\sum_{i\in [k-1]} \lambda_i z_i):=\sum_{i\in [k-1]} \lambda_i x_i.
\]
This is a well-defined injective linear map, since
\begin{align*}
    \sum_{i\in [k-1]} \lambda_i z_i=0&\iff
    (\lambda_1,\ldots, \lambda_{k-1}, -\sum_{i\in [k-1]}\lambda_i)\in \Annbal(z)\\
    &\iff (\lambda_1,\ldots, \lambda_{k-1}, -\sum_{i\in [k-1]}\lambda_i)\in \Annbal(x)\\
    &\iff \sum_{i\in [k-1]} \lambda_i x_i=0.
\end{align*}
Since $n\geq r$, we can extend $L$ to an injective linear map $\F_q^r\to \F_q^n$. So $x$ is an affine copy of $z$.
\end{proof}

We can now prove Corollary~\ref{cor:mainC} as a direct consequence of Theorem~\ref{thm:mainA} and Theorem~\ref{thm:mainB}.
\begin{proof}[Proof of Corollary~\ref{cor:mainC}]
Let $A\in \F_q^{m\times k}$ be a matrix such that the rows form a basis of $\Annbal(z_1,\ldots, z_k)$. So $z=(z_1,\ldots, z_k)$ is a generic solution to $Ax^\transp=0$. If $q\not\in \{2,3\}$, then the premise of Corollary~\ref{cor:mainC} implies by Lemma~\ref{lem:bases} that for every $i\in [k]$ the matrix $A$ has two disjoint bases in $[k]\setminus\{i\}$. Hence, by Lemma~\ref{tamechar}, the matrix $A$ is tame.

By Lemma~\ref{lem:affinecopy}, it follows directly from Theorem~\ref{thm:mainA} and Theorem~\ref{thm:mainB} that there is a $\delta>0$ such that any subset $S\subseteq \F_q^n$ without affine copy of $z$ has size $|S|=O(q^{(1-\delta)n})$.
\end{proof}

\section{Concluding remarks}\label{sec:conclusion}
An advantage of considering generic solutions over \mytag{}-shapes, is that they can be used to obtain not just one, but many solutions given a sufficiently large set $S\subseteq \F_q^n$. In particular, we obtain Corollary~\ref{cor:mainD} which we restate here.

\begin{corollary}
    Suppose that the coefficient matrix $A$ of \mytag{} is tame. Then there exist $\delta>0$ and $C>0$ such that for all positive $\delta'<\delta$ the following holds: if $S\subseteq \F_q^n$ has size $|S|\geq q^{(1-\delta')n}$, then $S$ has $\Omega(q^{(k-m)n-C\delta'n})$ solutions to \mytag{}. 
\end{corollary}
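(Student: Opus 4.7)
The plan is to combine the already-established temperate property (Theorem~\ref{thm:mainA} or Theorem~\ref{thm:mainB}, depending on the hypothesis) with the supersaturation result (Proposition~\ref{supersatnew}). The corollary is essentially the amplified form of temperance at the extreme rank $r=k-m$, so no new ingredients are needed.

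First, I would fix $r:=k-m$. By Lemma~\ref{lem:generic}, every solution to $Ax^{\transp}=0$ has affine rank at most $r$, and the solutions of affine rank equal to $r$ are exactly the generic solutions. Under the hypothesis ($A$ tame or $q\in\{2,3\}$), Theorem~\ref{thm:mainA} respectively Theorem~\ref{thm:mainB} gives that $(\star)$ is temperate: there exist $\delta>0$ and $n_0\in\NN$ such that every $S\subseteq \F_q^n$ with $n\geq n_0$ and $|S|\geq q^{(1-\delta)n}$ contains at least one generic solution, hence a solution of affine rank $\geq r$.

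Next, I would apply Proposition~\ref{supersatnew} with this choice of $r$ and $\delta$. For any $\delta'\in(0,\delta)$, the proposition yields constants $n_1(\delta')$ and a positive constant $C_0$ (depending on $r,q,\delta$ but not on $\delta'$) such that every $S\subseteq \F_q^n$ with $n\geq n_1(\delta')$ and $|S|\geq q^{(1-\delta')n}$ contains at least
\[
C_0\, q^{rn-\epsilon n}=C_0\,q^{(k-m)n-\epsilon n}
\]
solutions of affine rank $\geq r$, where $\epsilon=\delta'\cdot\frac{r-1+2\delta}{\delta}$. Setting
\[
C:=\frac{r-1+2\delta}{\delta}=\frac{k-m-1+2\delta}{\delta},
\]
we obtain $\epsilon=C\delta'$. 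Since each such solution has affine rank exactly $r=k-m$ (the maximum possible), it is a generic solution, and in particular a solution to $(\star)$. This gives the promised lower bound of $\Omega(q^{(k-m)n-C\delta' n})$ solutions.

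The only thing to double-check is that the constants are uniform in $\delta'$ in the right way: inspection of the definitions of $C$ and the constant from Proposition~\ref{supersatnew} shows that $C$ depends only on $k,m,\delta$, while the multiplicative constant $C_0$ depends on $q,r,\delta$ (also independent of $\delta'$). There is no real obstacle; the corollary is a direct composition of the main theorem with the subspace-sampling amplification already proved.
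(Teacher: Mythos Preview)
Your proof is correct and follows exactly the route the paper takes: the paper's own proof simply states ``This follows directly from Theorem~\ref{thm:mainA} and Proposition~\ref{supersatnew},'' and you have spelled out precisely that composition, including the identification $r=k-m$ via Lemma~\ref{lem:generic} and the explicit constant $C=(r-1+2\delta)/\delta$ coming from the $\epsilon$ in Proposition~\ref{supersatnew}.
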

\begin{proof}
This follows directly from Theorem~\ref{sec:thmA} and Proposition~\ref{supersatnew}.
\end{proof}

We conclude with an open problem for which we think both a positive and a negative answer would be very interesting.
\begin{open problem}
Is it true that \mytag{} is temperate for all balanced matrices $A$?
\end{open problem}
A positive answer would be a very strong result, since we do not even know if the system corresponding to $4$-term progressions is temperate. As a final note, we remark that the situation where $A$ is tame, corresponds to a strict subset of systems of linear forms of Cauchy-Schwarz complexity~$1$ (see: \cite{GowersWolf2007}). Perhaps this class would be a good candidate for proving a generalisation. 

%%% AUTHOR:
%%% Bibliography goes here. Note that the arXiv cannot process bibtex
%%% or biber bibliographies.  Example of acceptable bibliograpy format:
\bibliographystyle{amsplain}

\begin{thebibliography}{99}

\bibitem{blasiak2016cap}
Jonah Blasiak, Thomas Church, Henry Cohn, Joshua~A. Grochow, Eric Naslund,
  William~F. Sawin, and Chris Umans.
\newblock On cap sets and the group-theoretic approach to matrix
  multiplication.
\newblock {\em Discrete Analysis}, 2017(3):1--27, 2017.


\bibitem{bonin2000size}
Joseph~E. Bonin and Hongxun Qin.
\newblock Size functions of subgeometry-closed classes of representable
  combinatorial geometries.
\newblock {\em Discrete Mathematics}, 224(1-3):37--60, 2000.


\bibitem{Jop}
Jop Bri\"et.
\newblock Personal communication.


\bibitem{Croot-Lev-Pach}
Ernie Croot, Vsevolod~F. Lev, and P\'eter~P\'al Pach.
\newblock {Progression-free sets in $\mathbb{Z}_4^n$ are exponentially small}.
\newblock {\em Annals of Mathematics}, 185(1):331--337, 2017.


\bibitem{JosseDion}
Josse {\noopsort{Dobben de Bruyn}}van Dobben~de Bruyn and Dion Gijswijt.
\newblock On the size of subsets of $\mathbb{F}_q^n$ avoiding solutions to linear systems with repeated columns.
\newblock {\em The Electronic Journal of Combinatorics}, 30(4):P4.1, 2023.



\bibitem{Ellenberg-Gijswijt}
Jordan~S. Ellenberg and Dion Gijswijt.
\newblock On large subsets of $\mathbb{F}_q^n$ with no three-term arithmetic
  progression.
\newblock {\em Annals of Mathematics}, 185(1):339--343, 2017.


\bibitem{furstenberg1991density}
Hillel Furstenberg and Yitzhak Katznelson.
\newblock A density version of the {H}ales-{J}ewett theorem.
\newblock {\em Journal d’Analyse Math{\'e}matique}, 57(1):64--119, 1991.


\bibitem{fox2017tight}
Jacob Fox and L{\'a}szl{\'o} Mikl{\'o}s Lov{\'a}sz.
\newblock A tight bound for green's arithmetic triangle removal lemma in vector
  spaces.
\newblock In {\em Proceedings of the Twenty-Eighth Annual ACM-SIAM Symposium on
  Discrete Algorithms}, pages 1612--1617. SIAM, 2017.


\bibitem{fox2019popular}
Jacob Fox and Huy~Tuan Pham.
\newblock Popular progression differences in vector spaces~II.
\newblock {\em Discrete analysis}, 2019(16), 2019.


\bibitem{fraser2021three}
Robert Fraser.
\newblock Three-term arithmetic progressions in subsets of $f_q^{\infty}$ of large fourier dimension.
\newblock {\em Annales Fennici Mathematici}, 46(2):1007--1030, 2021.


\bibitem{fox2018erdHos}
Jacob Fox and Lisa Sauermann.
\newblock Erd{\H{o}}s-{G}inzburg-{Z}iv constants by avoiding three-term arithmetic progressions.
\newblock {\em The Electronic Journal of Combinatorics}, 25(2):P2.14, 2018.


\bibitem{geelen2015analogue}
Jim Geelen and Peter Nelson.
\newblock An analogue of the {E}rd{\H{o}}s-{S}tone theorem for finite
  geometries.
\newblock {\em Combinatorica}, 35(2):209--214, 2015.


\bibitem{GowersWolf2007}
William~T Gowers and Julia Wolf.
\newblock The true complexity of a system of linear equations.
\newblock {\em Proceedings of the London Mathematical Society},
  100(1):155--176, 2010.


\bibitem{kovac2021popular}
Vjekoslav Kova\v{c}.
\newblock Popular differences for right isosceles triangles.
\newblock {\em The Electronic Journal of Combinatorics}, 28(4):P4.27, 2021.


\bibitem{Mimura-Tokushige-star}
Masato Mimura and Norihide Tokushige.
\newblock Avoiding a star of three-term arithmetic progressions.
\newblock {\noopsort{2019a}}2019, \href{https://arxiv.org/abs/1909.10507}{arXiv:1909.10507}.


\bibitem{Mimura-Tokushige-shape}
Masato Mimura and Norihide Tokushige.
\newblock Avoiding a shape, and the slice rank method for a system of
  equations.
\newblock {\noopsort{2019b}}2019, \href{https://arxiv.org/abs/1909.10509}{arXiv:1909.10509}.


\bibitem{Mimura-Tokushige-II}
Masato Mimura and Norihide Tokushige.
\newblock Solving linear equations in a vector space over a finite field {II}, 2020.
\newblock Preprint.
\newblock URL: \url{http://www.cc.u-ryukyu.ac.jp/~hide/sol2.pdf}.

\bibitem{mimura2021solving}
Masato Mimura and Norihide Tokushige.
\newblock Solving linear equations in a vector space over a finite field,
\newblock {\em Discrete Mathematics}, 344(12):P112603, 2021.



\bibitem{naslund2020exponential}
Eric Naslund.
\newblock Exponential bounds for the {E}rd{\H{o}}s-{G}inzburg-{Z}iv constant.
\newblock {\em Journal of Combinatorial Theory, Series A}, 174:P105185, 2020.


\bibitem{pohoata1905four}
Cosmin Pohoata and Oliver Roche-Newton.
\newblock Four-term progression free sets with three-term progressions in all large subsets.
\newblock {\em Random Structures \& Algorithms}, 60(4):749--770, 2021.


\bibitem{ruzsa1993solving}
Imre~Z. Ruzsa.
\newblock Solving a linear equation in a set of integers {I}.
\newblock {\em Acta arithmetica}, 65(3):259--282, 1993.


\bibitem{Sauermann}
Lisa Sauermann.
\newblock Finding solutions with distinct variables to systems of linear equations over $\mathbb{F}_p^n$,
\newblock {\em Mathematische Annalen}, 386:1--33, 2023.


\bibitem{sauermann2021size}
Lisa Sauermann.
\newblock On the size of subsets of $\mathbb{F}_p^n$ without $p$ distinct elements summing to zero.
\newblock {\em Israel Journal of Mathematics}, 243(1):63--79, 2021.


\bibitem{Schrijver}
Alexander Schrijver.
\newblock {\em Combinatorial optimization: polyhedra and efficiency},
  volume~24.
\newblock Springer Science \& Business Media, 2003.


\bibitem{szemeredi1975sets}
Endre Szemer{\'e}di.
\newblock On sets of integers containing no $k$ elements in arithmetic progression.
\newblock {\em Acta Arith}, 27:199--245, 1975.


\bibitem{Tao-slice-rank}
Terence Tao.
\newblock Notes on the ``slice rank'' of tensors, 2016.\\
\newblock URL:
  \href{https://terrytao.wordpress.com/2016/08/24/notes-on-the-slice-rank-of-tensors/}{https://terrytao.wordpress.com/2016/08/24/notes-on-the-slice-rank-of-tensors}.


\bibitem{Tao-symmetric}
Terence Tao.
\newblock A symmetric formulation of the
  {C}root-{L}ev-{P}ach-{E}llenberg-{G}ijswijt capset bound, 2016.
\newblock URL: \href{https://terrytao.wordpress.com/2016/05/18/a-symmetric-formulation-of-the-croot-lev-pach-ellenberg-gijswijt-capset-bound/}{https://terrytao.wordpress.com/2016/05/18/a-symmetric-formulation-of-the-croot-lev-pach-ellenberg-gijswijt-capset-bound}.


\bibitem{tao2006additive}
Terence Tao and Van~H Vu.
\newblock {\em Additive combinatorics}, volume 105.
\newblock Cambridge University Press, 2006.

\end{thebibliography}
\providecommand{\noopsort}[1]{}

%%% AUTHOR: Include a short description of each author following the
%%% structure below. Use the same short tags used previously.  
%%% Use \imageat{} and \imagedot{} instead of "@" and "." in
%%% email addresses-this replaces the symbols with graphics to avoid 
%%% e-mail address harvesting from the .pdf file
\begin{dajauthors}
\begin{authorinfo}[dion]
  Dion Gijswijt\\
  Delft Institute of Applied Mathematics\\
  Delft, The Netherlands\\
  d\imagedot{}c\imagedot{}gijswijt\imageat{}tudelft\imagedot{}nl \\
  \url{https://diamhomes.ewi.tudelft.nl/~dgijswijt/}
\end{authorinfo}
\end{dajauthors}

\end{document}